\definecolor{gray73}{RGB}{186,186,186}
\theoremstyle{plain}
\newtheorem{theorem}{Theorem}[section]
\newtheorem{lemma}[theorem]{Lemma}
\theoremstyle{definition}
\newtheorem{definition}[theorem]{Definition}
\newtheorem{proposition}[theorem]{Proposition}
\newtheorem{example}[theorem]{Example}
\newtheorem{algorithm}[theorem]{Algorithm}
\newtheorem{remark}[theorem]{Remark}
\newenvironment{proof}[1][Proof]{\noindent\textbf{#1:} }{\ \rule{0.5em}{0.5em}}
\numberwithin{equation}{section}
\newcommand{\sign}{\operatorname*{sign}}
\newcommand{\vertiii}[1]{{\left\vert\kern-0.25ex\left\vert\kern-0.25ex\left\vert #1
    \right\vert\kern-0.25ex\right\vert\kern-0.25ex\right\vert}}
\DeclareMathOperator{\prox}{prox}
\DeclareMathOperator*{\argmin}{arg\,min}
\DeclareFontFamily{U}{rcjhbltx}{}
\DeclareFontShape{U}{rcjhbltx}{m}{n}{<->rcjhbltx}{}
\DeclareSymbolFont{hebrewletters}{U}{rcjhbltx}{m}{n}
\let\aleph\relax\let\beth\relax
\let\gimel\relax\let\daleth\relax
\DeclareMathSymbol{\aleph}{\mathord}{hebrewletters}{39}
\DeclareMathSymbol{\beth}{\mathord}{hebrewletters}{98}
\DeclareMathSymbol{\gimel}{\mathord}{hebrewletters}{103}
\DeclareMathSymbol{\daleth}{\mathord}{hebrewletters}{100}
\DeclareMathSymbol{\lamed}{\mathord}{hebrewletters}{108}
\DeclareMathSymbol{\mem}{\mathord}{hebrewletters}{109}
\DeclareMathSymbol{\ayin}{\mathord}{hebrewletters}{96}
\DeclareMathSymbol{\tsadi}{\mathord}{hebrewletters}{118}
\DeclareMathSymbol{\qof}{\mathord}{hebrewletters}{114}
\DeclareMathSymbol{\shin}{\mathord}{hebrewletters}{152}
\begin{document}

\title{Frame Soft Shrinkage Operators are Proximity Operators}
\date{}
\author {Jakob Alexander Geppert$^{*}$ \quad and \quad  Gerlind Plonka\footnote{Institute for Numerical and Applied Mathematics, G\"ottingen University, Lotzestr.\ 16 -- 18,  37083 G\"ottingen, Germany. Email: \{j.geppert,plonka\}@math.uni-goettingen.de}}

\maketitle

\abstract{{\normalfont \bfseries Abstract.}
In this paper, we show that the commonly used frame soft shrinkage operator, that maps a given vector ${\mathbf x} \in {\mathbb R}^{N}$ onto the vector ${\mathbf T}^{\dagger} S_{\gamma} {\mathbf T}  {\mathbf x}$, is already a proximity operator, which can therefore be directly used in corresponding splitting algorithms. In our setting,  the frame transform matrix ${\mathbf T} \in {\mathbb R}^{L \times N}$ with $L \ge N$ has full rank $N$, ${\mathbf T}^{\dagger}$ denotes the Moore-Penrose inverse of ${\mathbf T}$, and $S_{\gamma}$ is the usual soft shrinkage operator  with threshold parameter $\gamma >0$.
Our result generalizes the known assertion that ${\mathbf T}^{*} S_{\gamma} {\mathbf T}$ is the proximity operator of $\| {\mathbf T} \cdot \|_{1}$
if ${\mathbf T}$ is an orthogonal (square) matrix.
It is well-known that for rectangular frame matrices ${\mathbf T}$ with $L > N$, the proximity operator of $\| {\mathbf T} \cdot \|_{1}$  does not have  a closed representation and needs to be computed iteratively.
We show that the frame soft shrinkage operator {${\mathbf T}^{\dagger} S_{\gamma} {\mathbf T}$} is a proximity operator as well,
thereby  motivating its application as a replacement of the exact proximity operator of $\| {\mathbf T} \cdot \|_{1}$.
We further give an explanation, why the usage of the frame soft shrinkage operator still provides good results in various applications.
 In particular, we provide some properties of the subdifferential of the convex functional $\Phi$ which leads to the proximity operator ${\mathbf T}^{\dagger} S_{\gamma} {\mathbf T}$ and show that ${\mathbf T}^{\dagger} S_{\gamma} {\mathbf T}$ approximates $\textrm{prox}_{\|{\mathbf T} \cdot\|_{1}}$.

\medskip

\textbf{Key words}: proximity operator; frame soft shrinkage; maximally cyclically monotone subdifferential;  splitting algorithms for inverse problems.
}

\section{Introduction}
Wavelet shrinkage and frame shrinkage operators, as e.g.\ curvelet and shearlet shrinkage are common tools in image denoising and reconstruction.
The underlying idea is that images are often piecewise smooth and can therefore be presented sparsely in these frames, cf.\ \cite{Easley_2008,Kittipoom_2012,Ma_2010}.
For discrete images of size $N_{1} \times N_{2}$ with $N=N_{1} \, N_{2}$, we can assume that the suitable image transform, which forces sparsity in the transformed domain, can be represented by a linear transform matrix  ${\mathbf T} \in {\mathbb R}^{L \times N}$ with $L \ge N$ of full rank $N$  which is applied to the vectorized image ${\mathbf f} \in {\mathbb R}^{N}$.
The simplest idea for wavelet or frame denoising of a noisy  image ${\mathbf f}$ consists of the following steps. First, we apply the wavelet/frame transform to obtain ${\mathbf T} \,{\mathbf f}$. Second,  we apply a soft shrinkage operator componentwise to eliminate the small wavelet/frame coefficients and obtain $S_{\gamma} \,({\mathbf T}\, {\mathbf f})$. Third, we apply the ``inverse'' transform
to obtain the denoised image ${\mathbf x}:={\mathbf T}^{-1} S_{\gamma} \, ({\mathbf T} \, {\mathbf f})$.
When ${\mathbf T}$ is not longer surjective and thus not invertible, the inverse transform ${\mathbf T}^{-1}$ will be replaced by the Moore Penrose inverse ${\mathbf T}^{\dagger}$, see e.g. \cite{Elad_2006,ER06,SCD02} and references therein.
Our ultimative goal in this paper is to show that
the frame soft shrinkage operator ${\mathbf T}^{\dagger} \, S_{\gamma} ({\mathbf T} \cdot )$ is the proximity operator of a proper, lower semi-continuous, convex functional ${\mathbf \Phi}$ and therefore can be directly linked to the solution of a minimization problem.  We start with  some notations and motivating comments.

Using an optimization approach, image denoising can be performed by asking for the minimizer $\hat{\mathbf x}$ of the functional
\begin{equation}\label{deno} \tfrac{1}{2}\|{\mathbf f} - {\mathbf x}\|_{2}^{2} + \gamma \, \|{\mathbf T}{\mathbf x}\|_{1}, \end{equation}
where ${\mathbf y}$ denotes the given noisy (vectorized) image, and we are looking to find some  ${\mathbf x}$ that minimizes this functional. Here, the regularization term $\|{\mathbf T}{\mathbf x}\|_{1}$ is taken in order to force the solution ${\mathbf x}$ to have a sparse expansion in the transformed domain.
In this context, $\| \cdot\|_{2}$ and $\| \cdot \|_{1}$ denote the Euclidean and the $1$-norm of vectors, i.e., $\| {\mathbf x}\|_{2}^{2} := \sum_{j=1}^{N} x_{j}^{2}$ and  $\| {\mathbf x}\|_{1} := \sum_{j=1}^{N} |x_{j}|$, respectively, and $\gamma >0$ is a regularization parameter.
Observe that the regularization term $\gamma \, \|{\mathbf T}{\mathbf x}\|_{1}$ is not differentiable with respect to $\mathbf{x}$, but still convex.
To study such minimization problems, we denote  the set of proper, lower semicontinuous, convex functionals  $\Phi: {\mathbb R}^{N} \to {\mathbb R}$ by $\Gamma_{0}$ and  introduce the so-called \emph{proximal mapping} or the \emph{proximity operator} for $\Phi \in \Gamma_{0}$,
\begin{equation}\label{prox} \textrm{prox}_{\Phi} ({\mathbf y}) \coloneqq \argmin_{{\mathbf x} \in {\mathbb R}^{N}} \left\{\tfrac{1}{2} \|{\mathbf y}- {\mathbf x} \|_{2}^{2} + \Phi({\mathbf x}) \right\}.
\end{equation}
In our special case, we have $\Phi({\mathbf x}) = \gamma \, \|{\mathbf T}{\mathbf x}\|_{1}$ such that  $\hat{\mathbf x} = \textrm{prox}_{\gamma \, \|{\mathbf T}\cdot\|_{1}} ({\mathbf f})$ is the denoising solution of (\ref{deno}).
To solve the minimization problem in (\ref{prox}) we use the  subdifferential $\partial \Phi$, which is defined as the set-valued operator
\begin{equation}\label{part}
\partial \Phi
({\mathbf x}) \coloneqq \{{\mathbf y} \in {\mathbb R}^{N}: \langle {\mathbf y}, \tilde{\mathbf x} - {\mathbf x} \rangle \le \Phi(\tilde{\mathbf x}) - \Phi({\mathbf x}), \, \forall \, \tilde{\mathbf x} \in {\mathbb R}^{N} \},
\end{equation}
where $\langle \cdot, \, \cdot \rangle$ denotes  a fixed scalar product in ${\mathbb R}^{N}$.
The subdifferential directly generalizes the usual notion of the derivative, and the minimizer $\textrm{prox}_{\Phi} ({\mathbf y}) $ of the functional in (\ref{prox})  necessarily satisfies
\begin{equation}\label{star}  0 \in \partial \left( \tfrac{1}{2}\|\textrm{prox}_{\Phi} ({\mathbf y}) - {\mathbf y}\|_{2}^{2} + \Phi(\textrm{prox}_{\Phi} ({\mathbf y})) \right)
= (\textrm{prox}_{\Phi} ({\mathbf y}) - {\mathbf y}) + \partial \Phi (\textrm{prox}_{\Phi} ({\mathbf y})),
\end{equation}
i.e.,
\begin{equation}\label{reso}  {\prox}_{\Phi} ({\mathbf y})= ( {\mathbf I}_{N} +  \partial \Phi)^{-1}({\mathbf y}),
\end{equation}
where ${\mathbf I}_{N}$ denotes the identity matrix. In particular, ${\prox}_{\Phi}$ is well-defined and single-valued, \cite{Ekeland}.
For the special case $\Phi({\mathbf x}) = \gamma \|{\mathbf x}\|_{1}$ (i.e., ${\mathbf T}$ is the identity operator),
the corresponding {proximity operator} turns out to be the well-known soft shrinkage operator,
\begin{equation}\label{soft} \prox_{\gamma \| \cdot \|_{1}} ({\mathbf y}) = S_{\gamma}({\mathbf y}) \quad  \textrm{with} \quad \left[ S_{\gamma} ({\mathbf y}) \right]_{j} \coloneqq  \begin{cases} y_{j}-\gamma, & y_{j} \ge \gamma, \\
y_{j} + \gamma, & y_{j} \le - \gamma,\\
0, & |y_{j}| < \gamma. \end{cases}
\end{equation}
For $\Phi({\mathbf x}) = \gamma \, \|{\mathbf T}{\mathbf x} \|_{1}$ with ${\mathbf T}$ being an orthonormal transform matrix, it can be shown that the corresponding proximity operator is
\begin{equation}
 \prox_{\gamma \| {\mathbf T} \cdot \|_{1}} ({\mathbf y}) = {\mathbf T}^{*} \, S_{\gamma} ({\mathbf T} \, {\mathbf y}) = {\mathbf T}^{-1} \, S_{\gamma} ({\mathbf T} \, {\mathbf y}),
\end{equation}
see Proposition 23.29 in \cite{Bauschke_2011}.  In other words, the soft shrinkage procedure ${\mathbf T}^{-1} \, S_{\gamma} {\mathbf T}$ explained in the beginning turns out to be equivalent to the solution of the functional minimization problem for any orthogonal transform matrix ${\mathbf T}$.
This, however is no longer true if ${\mathbf T}$ is not orthogonal or if $\mathbf{T}$ is not even surjective, i.e., if ${\mathbf T} \in {\mathrm R}^{L \times N}$ for $L > N$, see e.g.\ \cite{Elad_2006}.
In this case, the proximity operator of $\gamma \| {\mathbf T} \cdot \|_{1}$ is not equal to ${\mathbf T}^{*} \, S_{\gamma} ({\mathbf T} \, {\mathbf x})$.
It can no longer be represented in a closed form, and one has to make use of an iteration procedure to compute it.

But we can ask the following question. Can the solution of the frame soft shrinkage procedure ${\mathbf x}:={\mathbf T}^{\dagger} S_{\gamma} \, ({\mathbf T} \, {\mathbf y})$ with a frame transform matrix ${\mathbf T} \in {\mathrm R}^{L \times N}$ for $L > N$ be understood as the minimizer  of a functional similar to that in (\ref{deno})?
In other words, is ${\mathbf T}^{\dagger} S_{\gamma} \, {\mathbf T}$ a proximity operator?
And if yes, how far is it away from the minimizer of $\tfrac{1}{2} \|{\mathbf y}- {\mathbf x} \|_{2}^{2} + \gamma \, \|{\mathbf T}{\mathbf x}\|_{1}$?

A related question has been posed also by Elad in \cite{Elad_2006} from a different viewpoint. His main argument to explain the good performance of methods that just employ the frame soft shrinkage operator is based on  the connection to the solution of basis pursuit denoising (BPDN) problems \cite{chen2001}.
He showed that the application of the frame soft shrinkage operator can be interpreted as the first iteration step of an iterative algorithm to solve the BPDN problem.
\medskip

There are several motivations to study the question, whether the frame soft shrinkage operator is a proximity operator.\\
First, from the viewpoint of convex analysis, it is interesting to give a complete answer to the question, whether the concatenation of a proximity operator and a linear operator ${\mathbf T}^{\dagger} \, \prox_{\Phi} {\mathbf T}$ is still a proximity operator.
The answer had been known before for surjective transforms ${\mathbf T}$ satisfying ${\mathbf T} {\mathbf T}^{T} = \alpha {\mathbf I}$, see e.g.\ \cite{Beck17}, Section 6.

\smallskip

Second, knowing that the frame soft shrinkage operator is a proximity operator, it can be used as an activation function in the construction of neural networks.
In \cite{CP20}, it has been recently shown that all activation functions appearing in neural  networks are indeed proximity operators, see also \cite{HHN20}, Table 3.
In \cite{HHN20}, particularly the frame soft shrinkage operator with so-called Parseval frame matrices ${\mathbf T} \in {\mathbb R}^{L \times N}$, $L>N$, satisfying ${\mathbf T}^{T} {\mathbf T} = {\mathbf I}_{N}$ has been successfully employed.
\smallskip

Third, we are able to give an answer to the question raised above, namely that the frame soft  shrinkage for image denoising can be indeed understood as a minimizer of a variational problem and we can show that the denoising result is close to the minimizer of (\ref{deno}) also in the case of frame transform matrices ${\mathbf T}$ which are not surjective.

Finally, we mention that this observation is also interesting for image reconstruction problems, where sparsity of the transformed image ${\mathbf T} {\mathbf y}$ is used as a prior.

In this regard, one often studies the minimization of a functional of the form
\begin{equation}
\label{var}
 \hat{\mathbf x} = \argmin_{{\mathbf x} \in {\mathbb R}^{N}}  F({\mathbf x}) = \argmin_{{\mathbf x} \in {\mathbb R}^{N}} \left(\frac{1}{2} \|{\mathbf K} {\mathbf x} - {\mathbf f} \|_{2}^{2} +  \Phi({\mathbf x})\right),
\end{equation}
where  ${\mathbf K}\colon {\mathbb R}^{N} \to {\mathbb R}^{M}$ is a bounded linear operator,  e.g.\ a blurring operator, and ${\mathbf f} \in {\mathbb R}^{M}$ represents the measured (noisy) data,
see e.g.\ \cite{PM11,Vande}. In the compressed sensing approach, we usually have $M < N$, and a meaningful reconstruction strongly relies on the prior information of the image.
Often the regularization functional $\Phi({\mathbf x})$ ist taken in the form  $\Phi({\mathbf x}) = \gamma \, \| {\mathbf T} {\mathbf x}\|_{1}$ with some suitable matrix ${\mathbf T}$ that can be a frame transform matrix or a matrix built by a concatenation of two different discrete bases or frames, see e.g.\ \cite{PM11}. Another example is the anisotropic TV model
for a matrix ${\mathbf X}= (x_{jk})_{j,k=1}^{N_{1}, N_{2}}$,
$$ TV({\mathbf X}) \coloneqq \sum_{j=1}^{N_{1}-1}\sum_{k=1}^{N_{2}-1} (|x_{j+1,k} - x_{j,k}| + |x_{j,k+1}-x_{j,k}|)$$
that can be rewritten as $\|{\mathbf T} {\mathbf x}\|_{1}$ for the vectorized image ${\mathbf x} = \textrm{vec} \, ({\mathbf X}) \in {\mathbb R}^{N_{1}N_{2}}$ with some ${\mathbf T} \in {\mathbb R}^{2N_{1}N_{2}-N_{1}-N_{2}}$.
However, the lacking knowledge about a closed representation of the proximity operator of $\| {\mathbf T} {\mathbf x}\|_{1}$ strongly complicates the minimization problem (\ref{var}). Therefore one often considers a constrained optimization problem instead,
$$ \min_{{\mathbf x} \in {\mathbb R}^{N}, {\mathbf z} \in {\mathbb R}^{L}} \left(\tfrac{1}{2} \|{\mathbf K} {\mathbf x} - {\mathbf f} \|_{2}^{2} +  \gamma \|{\mathbf z}\|_{1}\right) \qquad \textrm{subject~to} \quad  {\mathbf T} {\mathbf x} = {\mathbf z}, $$
and employs and an augmented Lagrangian approach, since the proximity operator of $\|{\mathbf z}\|_{1}$, is known, see (\ref{soft}).
Nowadays this problem is usually solved via a primal-dual algorithm \cite{CP11}, a (preconditioned) ADMM \cite{Esser} or a split Bregman algorithm \cite{Osher, PM11}. These approaches are closely related and are equivalent under certain conditions, \cite{Setzer}.

If the proximity operator of $\Phi({\mathbf x})$ was known, the minimization problem (\ref{var}) could be simply solved by a forward-backward splitting method as follows:
The solution $\hat{\mathbf x}$ of (\ref{var})  necessarily satisfies
$$  0 \in \partial \left( \frac{1}{2} \|{\mathbf K} {\mathbf x} - {\mathbf f}\|_{2}^{2} +  \Phi({\mathbf x}) \right) = {\mathbf K}^{*} ( {\mathbf K} {\mathbf x} - {\mathbf f}) + \, \partial \Phi ({\mathbf x}).$$

\noindent
Multiplication with a constant $\lambda >0$ and addition of $\hat{\mathbf x}$  yields the equivalent statements
\begin{align*}
\hat{\mathbf x} -  \lambda \, {\mathbf K}^{*} ({\mathbf K} \hat{\mathbf x} - {\mathbf f})  &\in  \hat{\mathbf x} +  \lambda \, \partial \Phi (\hat {\mathbf x}) \\
({\mathbf I}_{N} -  \lambda \, {\mathbf K}^{*} {\mathbf K}) \hat{\mathbf x} +  \lambda \, {\mathbf K}^{*} {\mathbf f} & \in ({\mathbf I}_{N} +  \lambda \, \partial \Phi)^{-1}(\hat {\mathbf x}),
\end{align*}
with the $N \times N$ identity matrix ${\mathbf I}_{N}$. Thus, using (\ref{reso}), it follows that
\begin{equation}\label{ite1}
\hat{\mathbf x} = ( {\mathbf I}_{N} + \lambda \, \partial \Phi)^{-1} \left[ ({\mathbf I}_{N} -  \lambda \, {\mathbf K}^{*} {\mathbf K}) \hat{\mathbf x} +  \lambda \, {\mathbf K}^{*} {\mathbf f} \right] = \textrm{prox}_{\lambda \Phi} \left[ ({\mathbf I}_{N} - \lambda\,  {\mathbf K}^{*} {\mathbf K}) \hat{\mathbf x} +  \lambda \, {\mathbf K}^{*} {\mathbf f} \right].
\end{equation}

This yields the well-known forward-backward splitting iteration, see e.g.\ \cite{Lions79, Bauschke_2011, Setzer}, which converges for $\lambda \in (0, 2/\|{\mathbf K}\|_{2}^{2})$:

\begin{algorithm}
$(\textbf{Forward-backward splitting})$ \\
For an arbitrary starting vector ${\mathbf x}^{(0)} \in {\mathbb R}^{N}$, iterate
\begin{enumerate}
\item $ {\mathbf y}^{(j)} \coloneqq ({\mathbf I}_{N} -  \lambda \, {\mathbf K}^{*} {\mathbf K}) \, {\mathbf x}^{(j)} +  \lambda \, {\mathbf K}^{*} {\mathbf f}$,
\item $ {\mathbf x}^{(j+1)} \coloneqq {\prox}_{\lambda \Phi}({\mathbf y}^{(j)}). $
\end{enumerate}
\end{algorithm}
\medskip

Our result now allows us to take just  ${\prox}_{\lambda \Phi}({\mathbf y}^{(j)})= \lambda \, {\mathbf T} S_{\gamma} ({\mathbf T} {\mathbf y}^{(j)})$ in this iteration.
This is \emph{not} equivalent with taking the proximity operator of $\|{\mathbf T} {\mathbf y}^{(j)}\|_{1}$, but our results show that it is close!
We need to keep in mind that we want to enforce sparsity of ${\mathbf T}  {\mathbf x}$ using the functional $\Phi({\mathbf x} ) = \gamma \, \| {\mathbf T} {\mathbf x} \|_{1}$.
However, the used $\ell_{1}$-norm only acts as a proxy here -- a compromise to the convexity for $\Phi$ -- which is needed in order to ensure convergence of the iteration algorithms.
Instead, we would rather like to have actual sparsity of ${\mathbf T} {\mathbf x}$, i.e., a  small number of non-zero components in  ${\mathbf T} {\mathbf x}$.
Thus, one might wonder whether ${\mathbf T}^{\dagger} \, S_{\gamma} ({\mathbf T} \cdot )$ is doing the job as well as the exact proximity operator of $\| {\mathbf T} \, {\mathbf x} \|_{1}$.

Our results e.g.\ in \cite{Loock_2014,Loock_2016} imply similarly as earlier results in \cite{Elad_2006}, that this frame soft shrinkage works well in practice.
Knowing now that it is indeed a proximity operator, we can conclude that the convergence results that have been shown for the known iterative algorithms
for functional minimization can be applied if we use  frame soft shrinkage.

Note that the solution of (\ref{var}) for image reconstruction via forward-backward splitting is different from the so-called synthesis approach, where one simply considers ${\mathbf z}:={\mathbf T} {\mathbf x}$ and solves
$\textrm{argmin}_{{\mathbf z}} \,  ( \|{\mathbf K'} {\mathbf z} - {\mathbf T}{\mathbf y}\|_{2}^{2} + \gamma \, \| {\mathbf z}\|_{1})$ in the transformed domain with ${\mathbf K}'= {\mathbf K} {\mathbf T}^{\dagger}$, see \cite{Elad_2006,EMR07,BL08}.
\smallskip

 Based on the results in this paper for the frame soft shrinkage operator, we have been able to show the following more general result in the follow-up paper \cite{HHN20}:
Let ${\mathcal H}$ and ${\mathcal K}$ be two Hilbert spaces with scalar products $\langle \cdot, \cdot \rangle_{{\mathcal H}}$ and
$\langle \cdot, \cdot \rangle_{{\mathcal K}}$, $b \in {\mathcal K}$, ${T}: {\mathcal H} \to {\mathcal  K}$ a bounded linear operator with closed range,
and $\textrm{prox} : {\mathcal K} \to {\mathcal K}$  a proximity operator  on ${\mathcal K}$. Then ${T}^{\dagger} \, \textrm{prox} ({T} \cdot + b): {\mathcal H}_{{T}}  \to {\mathcal H}_{{T}}$ is a proximity operator,
where ${\mathcal H}_{{T}}$ denotes the Hilbert space ${\mathcal H}$ equipped
with the modified scalar product $\langle x, y \rangle_{{\mathcal H}_{T}} = \langle {T} x, {T} y \rangle_{{\mathcal K}} + \langle {\mathcal P}_{T}x, {\mathcal P}_{T}y \rangle_{\mathcal H}$,
and where ${\mathcal P}_{T}$ is the projection onto the kernel of ${T}$.

The results of the present paper  are however not contained in \cite{HHN20}.  Section 2 focusses on  the properties of the  subdifferential $H$ which is related to  ${\mathbf T}^{\dagger} S_{\gamma} {\mathbf T}$.  Further, our proof that ${\mathbf T}^{\dagger} S_{\gamma} {\mathbf T}$ is indeed a proximity operator essentially differs from  the proof given in \cite{HHN20}, where special properties of the Moreau envelope of functions in $\Gamma_{0}$ are used.

\medskip

This paper is structured as follows.
In Section 2, we introduce a set-valued  mapping $H$ which closely related to  the frame soft shrinkage operator ${\mathbf T}^{\dagger} \, S_{\gamma} ({\mathbf T} \cdot )$. We will show that  $H= \partial \Phi$ for some functional $\Phi \in \Gamma_{0}$ such that $\textrm{prox}_{\Phi}= {\mathbf T}^{\dagger}
 S_{\gamma }({\mathbf T} \cdot)$. We start with showing  that $H$ is well-defined and we derive some structural properties of $H$.
In particular, we prove that $H$ possesses similar properties as the  subdifferential of $ \|{\mathbf T} \cdot \|_{1}$.
In Section 3, we show that the set-valued mapping $H$ is maximally cyclically monotone and therefore indeed the subdifferential of a proper, lower semi-continuous and convex functional $\Phi$.
 The key idea to achieve this result is to apply a new scalar product in ${\mathbb R}^{N}$ which is aligned  with the linear operator ${\mathbf T}$.
We will be able to conclude that the frame soft shrinkage operator is indeed the proximity operator of a functional $\Phi$ with $H = \partial \Phi$ and particularly is non-expansive with respect to the aligned scalar product.

\section{A closer look at the frame soft shrinkage operator}

Our goal is to show that  for any frame matrix ${\mathbf T} \in {\mathbb R}^{L \times N}$  with $L \ge N$ and full rank $N$  the operator
$$ {\mathbf T}^{\dagger} \, S_{\gamma} {\mathbf T},$$
with $S_{\gamma}$ the soft threshold operator in (\ref{soft}), is the proximity operator of a convex, proper, lower semi-continuous functional, i.e., of some $\Phi \in \Gamma_{0}$.
 The full rank assumption on ${\mathbf T}$ implies that ${\mathbf T}^{*} {\mathbf T}$ is invertible and therefore ${\mathbf T}^{\dagger}= ({\mathbf T}^{*} {\mathbf T})^{-1} {\mathbf T}^{*}$, where ${\mathbf T}^{*}$ denotes the transpose of ${\mathbf T}$. Further, we have ${\mathbf T}^{\dagger} {\mathbf T} = {\mathbf I}_{N}$.
Note that the assumption that ${\mathbf T}$ has full rank $N$ is meaningful in this regard. If ${\mathbf T}$ is not injective, then any ${\mathbf x}$ in the kernel of ${\mathbf T}$ would be mapped to zero by ${\mathbf T}^{\dagger} \, S_{\gamma} {\mathbf T}$.

{
Recall that for a function $\Phi \in \Gamma_{0}$ it follows from (\ref{prox}) and (\ref{star}) that $0 \in \textrm{prox} ({\mathbf z}) - {\mathbf z} + \partial \Phi(\textrm{prox} ({\mathbf z}))$, i.e.,
$$  {\mathbf z} - {\mathbf x} \in \partial \Phi({\mathbf x})  \quad \Longleftrightarrow  \quad     {\mathbf x} = \textrm{prox} ({\mathbf z}),
 $$
or equivalently, with ${\mathbf z}= {\mathbf x} + {\mathbf y}$,
\begin{equation}\label{prox1} {\mathbf y} \in \partial \Phi({\mathbf x})  \quad \Longleftrightarrow  \quad     {\mathbf x} = \textrm{prox} ({\mathbf x} + {\mathbf y}),
\end{equation}
see  Proposition 16.34 in \cite{Bauschke_2011}.
Therefore,} we define in  a first step the set-valued mapping ${H}= H_{\gamma}\colon\mathbb{R}^N\rightrightarrows\mathbb{R}^N$ by
\begin{equation}\label{H}
{\mathbf y} \in H({\mathbf x}) \quad : \Longleftrightarrow \quad {\mathbf x} = {\mathbf T}^{\dagger} S_{\gamma} {\mathbf T} \, ({\mathbf x} + {\mathbf y}).
\end{equation}
Then, we have to show that $H$ is the subdifferential of a functional $\Phi \in \Gamma_{0}$.

\noindent
In this section, we will show that $H$ in (\ref{H}) is well-defined, and we will study some properties of $H$. In Section \ref{sec:4}, we will finally show that
indeed $H = \partial \Phi$ for some $\Phi \in \Gamma_{0}$.
In order to get a first idea of what happens here, we start off with a toy example.

\begin{lemma}\label{ex1} For ${\mathbf T}= \begin{pmatrix} 1 \\ c \end{pmatrix}$ with $c \ge 1$ and $\gamma >0$ we find for $H$ in $(\ref{H})$ for $x \ge 0$
$$ H({x}) = \begin{cases}
\gamma [- \frac{1}{c}, \, \frac{1}{c}] & x =0, \\[1mm]
\frac{\gamma}{c} + \frac{x}{c^{2}} & x \in (0, \frac{\gamma(c-1)c}{c^{2}+1} ] ,\\[1mm]
\gamma \left( \frac{1+c}{1+c^{2}} \right) & x > \frac{\gamma (c-1)c}{c^{2}+1}.
\end{cases}
$$
For $x<0$ we have $H(x) = - H(-x)$.
Then $H$ is the subdifferential of the even function
$$\Phi(x) =  \begin{cases}
 \frac{\gamma x}{c} +  \frac{x^{2}}{2c^{2}}  & x \in [0, \frac{\gamma(c-1)c}{c^{2}+1} ], \\
\gamma \left( \frac{1+c}{1+c^{2}} \right) x - \frac{\gamma^2 (c-1)^2}{2 (c^2+1)^2} & x > \frac{\gamma(c-1)c}{c^{2}+1}, \\
\Phi(-x) & x < 0. \end{cases}
$$
\end{lemma}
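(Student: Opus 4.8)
The plan is to reduce the relation $(\ref{H})$ to a scalar inversion problem and then read off both $H$ and its antiderivative $\Phi$. First I would compute the Moore--Penrose inverse explicitly: since $\mathbf{T} = (1,c)^{*}$ has full rank, $\mathbf{T}^{*}\mathbf{T} = 1+c^{2}$ and hence $\mathbf{T}^{\dagger} = \tfrac{1}{1+c^{2}}(1,\,c)$. Applying $S_{\gamma}$ componentwise to $\mathbf{T}z = (z,cz)^{*}$ and contracting with $\mathbf{T}^{\dagger}$, the frame soft shrinkage operator becomes the scalar map
$$ g(z) := \mathbf{T}^{\dagger}S_{\gamma}\mathbf{T}z = \frac{S_{\gamma}(z) + c\,S_{\gamma}(cz)}{1+c^{2}}. $$
Because $c \ge 1$, the two scalar soft shrinkages switch on at $z = \gamma/c$ and $z = \gamma$, so for $z \ge 0$ the function $g$ is continuous and piecewise linear: $g(z)=0$ on $[0,\gamma/c]$, slope $c^{2}/(1+c^{2})$ on $[\gamma/c,\gamma]$, and slope $1$ on $[\gamma,\infty)$; it is odd by the oddness of $S_{\gamma}$.

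Next I would obtain $H$ directly from $(\ref{H})$. Writing $z = x+y$, the condition $x = g(x+y)$ is equivalent to $y = z-x$ with $g(z)=x$, so $H(x)$ is computed by inverting $g$ and subtracting $x$. The key structural fact is that $g$ is nondecreasing and strictly increasing off its flat piece $[-\gamma/c,\gamma/c]$; hence for $x \ne 0$ there is a unique preimage $z$, while the preimage of $x=0$ is the whole interval $[-\gamma/c,\gamma/c]$. For $x=0$ this gives $y = z \in [-\gamma/c,\gamma/c]$, i.e.\ $H(0) = \gamma[-\tfrac1c,\tfrac1c]$. On the middle slope, solving $\frac{c^{2}z - c\gamma}{1+c^{2}} = x$ yields $z = \frac{(1+c^{2})x}{c^{2}} + \frac{\gamma}{c}$ and therefore $y = z-x = \frac{\gamma}{c} + \frac{x}{c^{2}}$, valid exactly while $z \le \gamma$, i.e.\ while $x \le \frac{\gamma(c-1)c}{c^{2}+1}$. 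On the outer slope $g(z) = z - \frac{(1+c)\gamma}{1+c^{2}}$, so $y = z-x$ collapses to the constant $\gamma\frac{1+c}{1+c^{2}}$. Together with oddness ($H(x) = -H(-x)$) this produces the three cases claimed for $H$.

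Finally I would confirm that the stated even $\Phi$ satisfies $\partial\Phi = H$. Since $H$ is single-valued and continuous on $(0,\infty)$, it suffices to check that $\Phi$ is an antiderivative of $H$ there: differentiating the quadratic piece gives $\frac{\gamma}{c}+\frac{x}{c^{2}}$ and the affine piece gives $\gamma\frac{1+c}{1+c^{2}}$, both matching $H$. I would then verify that the two pieces of $\Phi$ agree in value at the breakpoint $x_{0} = \frac{\gamma(c-1)c}{c^{2}+1}$, which is exactly what the additive constant $-\frac{\gamma^{2}(c-1)^{2}}{2(c^{2}+1)^{2}}$ is calibrated to enforce, and that the one-sided derivatives also agree at $x_0$, so $\Phi \in C^{1}$ on $(0,\infty)$. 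Convexity follows because $H$ is nondecreasing, and at the only kink $x=0$ the one-sided derivatives are $\pm\gamma/c$, giving $\partial\Phi(0) = [-\gamma/c,\gamma/c] = H(0)$; evenness extends this to $x<0$. Via $(\ref{prox1})$ this identifies $\mathbf{T}^{\dagger}S_{\gamma}\mathbf{T} = \textrm{prox}_{\Phi}$ in this example. The only genuinely delicate point is the bookkeeping at the two breakpoints --- ensuring that the preimage of $x=0$ includes the endpoint $z=\gamma/c$ and that the matching constant for $\Phi$ is computed correctly --- while the rest is a routine piecewise-linear inversion.
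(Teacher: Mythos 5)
Your proposal is correct and follows essentially the same route as the paper: both reduce $\mathbf{T}^{\dagger}S_{\gamma}\mathbf{T}$ to the scalar piecewise-linear map $z\mapsto \frac{S_{\gamma}(z)+cS_{\gamma}(cz)}{1+c^{2}}$, invert it case by case at the breakpoints $z=\gamma/c$ and $z=\gamma$, and integrate to obtain $\Phi$ with the constant fixed by continuity at $x_{0}=\frac{\gamma(c-1)c}{c^{2}+1}$. Your additional checks (monotonicity of $g$, the $C^{1}$ matching at $x_{0}$, and $\partial\Phi(0)=[-\gamma/c,\gamma/c]$) are all correct and slightly more explicit than the paper's ``integration gives $\Phi$ as asserted.''
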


\begin{proof}
For $x=0$, it follows from (\ref{H}) with  ${\mathbf  T}^{\dagger}= \frac{1}{1+c^{2}} (1, c)$ that
$$
y \in H(0) \quad  \Longleftrightarrow \quad 0 = \frac{1}{1+c^{2}} (1, c) \, S_{\gamma} \,  \begin{pmatrix} 1 \\ c \end{pmatrix} y.
$$
This is only true iff $S_{\gamma} \, (c\, y) =0$, i.e., $ y \in [-\frac{\gamma}{c}, \frac{\gamma}{c}]$.
For $x >0$ we find
$$ x = \frac{1}{1+c^{2}} (1, c) \, S_{\gamma} \, \begin{pmatrix} 1 \\ c \end{pmatrix} (y+x). $$
Since $x>0$, it necessarily follows that $c(x+y) > \gamma$ as well as $x+y >0$.
We consider two cases.

\noindent
1. If $x+y \le \gamma$ and $c(x+y) > \gamma$, then
$$ x = {\mathbf T}^{\dagger} S_{\gamma} {\mathbf T} (x+y) = \frac{1}{1+c^{2}} (1, c) \,  \begin{pmatrix} 0 \\ c(x+y) - \gamma \end{pmatrix}
= \frac{c^{2}(x+y) - c \gamma}{1 + c^{2}}  $$
implies
$$ y = \frac{x}{c^{2}} + \frac{\gamma}{c}. $$
Further, the condition
$ x+y  = x + (\frac{x}{c^{2}} + \frac{\gamma}{c}) \le \gamma$
 yields $ x \le \frac{c(c-1) \gamma}{1+c^{2}}$.

\noindent
2. Let now $x+y > \gamma$ and $c(x+y)  >\gamma$, then
$$ x = {\mathbf T}^{\dagger} S_{\gamma} {\mathbf T} (x+y) = \frac{1}{1+c^{2}} (1,c) \,  \begin{pmatrix} x+y - \gamma \\ c(x+y) - \gamma \end{pmatrix}  = x+y - \frac{\gamma(c+1)}{c^{2} + 1}. $$
Thus, we find $y = \frac{\gamma(c+1)}{c^{2} + 1}$, and $x+y > \gamma$ is true for $x >  \frac{\gamma c (c-1)}{c^{2} + 1}$.
Similar considerations for $x<0$ yield $H(-x) = -H(x)$.
\smallskip

\noindent
Integration  gives
$ \Phi(x) $ as asserted, where the constant is taken such that $\Phi(x)$ is continuous at $x=\frac{\gamma(c-1)c}{c^{2}+1}$.
\end{proof}

\begin{example}\label{ex}
If we employ Lemma $\ref{ex1}$ for $c=2$ and $\gamma = \frac{5}{3}$, we find
$$ H(x) = \begin{cases}
[-\frac{5}{6}, \, \frac{5}{6}] & x=0, \\
\frac{5}{6} + \frac{x}{4} & x \in (0, \frac{2}{3}], \\
1 & x >\frac{2}{3},\\
-H(-x) & x <0, \end{cases} \qquad\text{and}\qquad
\Phi(x) = \begin{cases}
\frac{5x}{6} + \frac{x^{2}}{8} & x \in [0, \, \frac{2}{3}] \\
x- \frac{1}{18} & x > \frac{2}{3} \\
\Phi(-x) & x < 0. \end{cases}
$$
Thus $\Phi(x)$ approximates $|x|$, see Figure $\ref{figH}$.
\end{example}

\begin{figure}[ht]
 \centering
      \includegraphics[width=.27\textwidth]{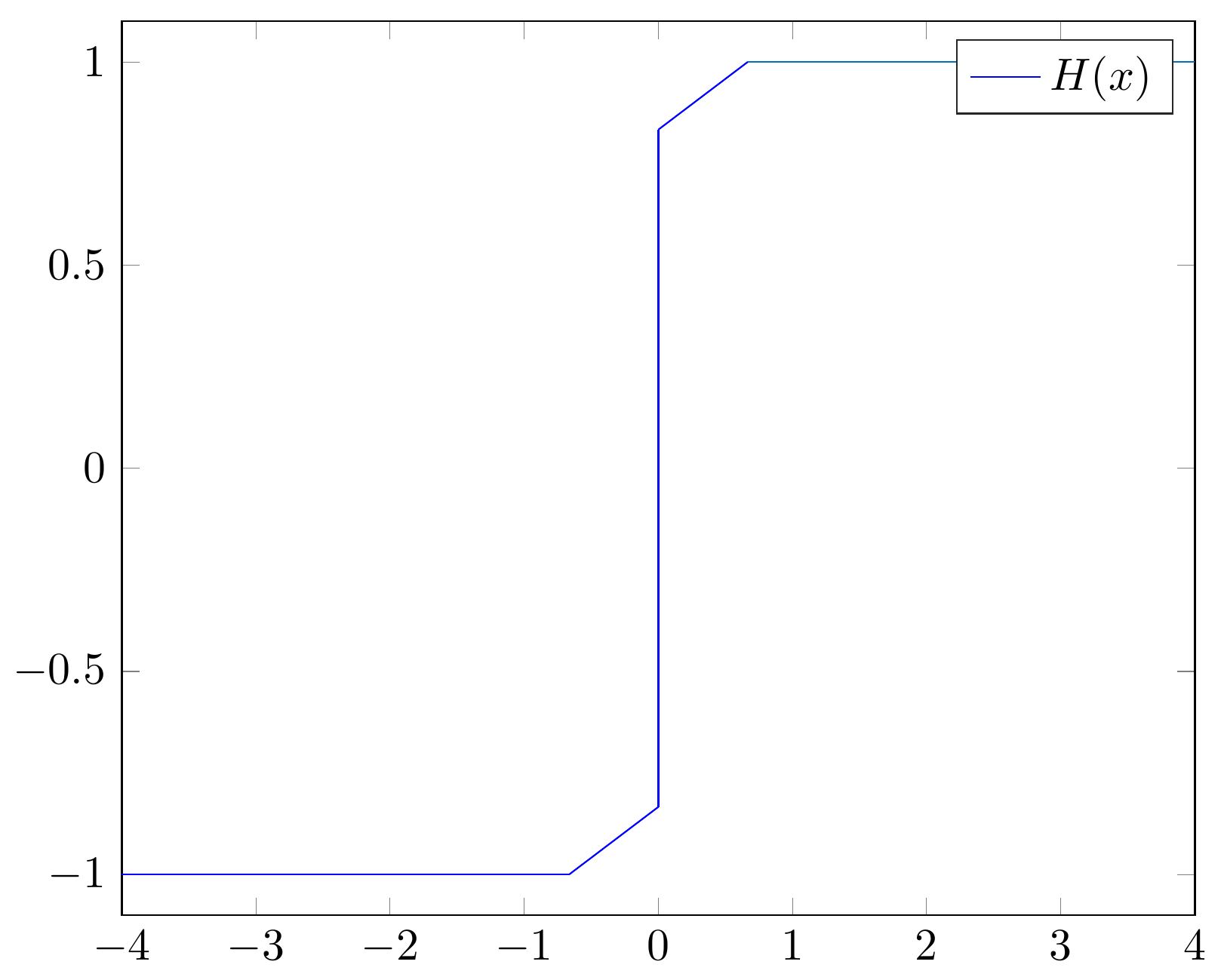}\quad
  \includegraphics[width=.26\textwidth]{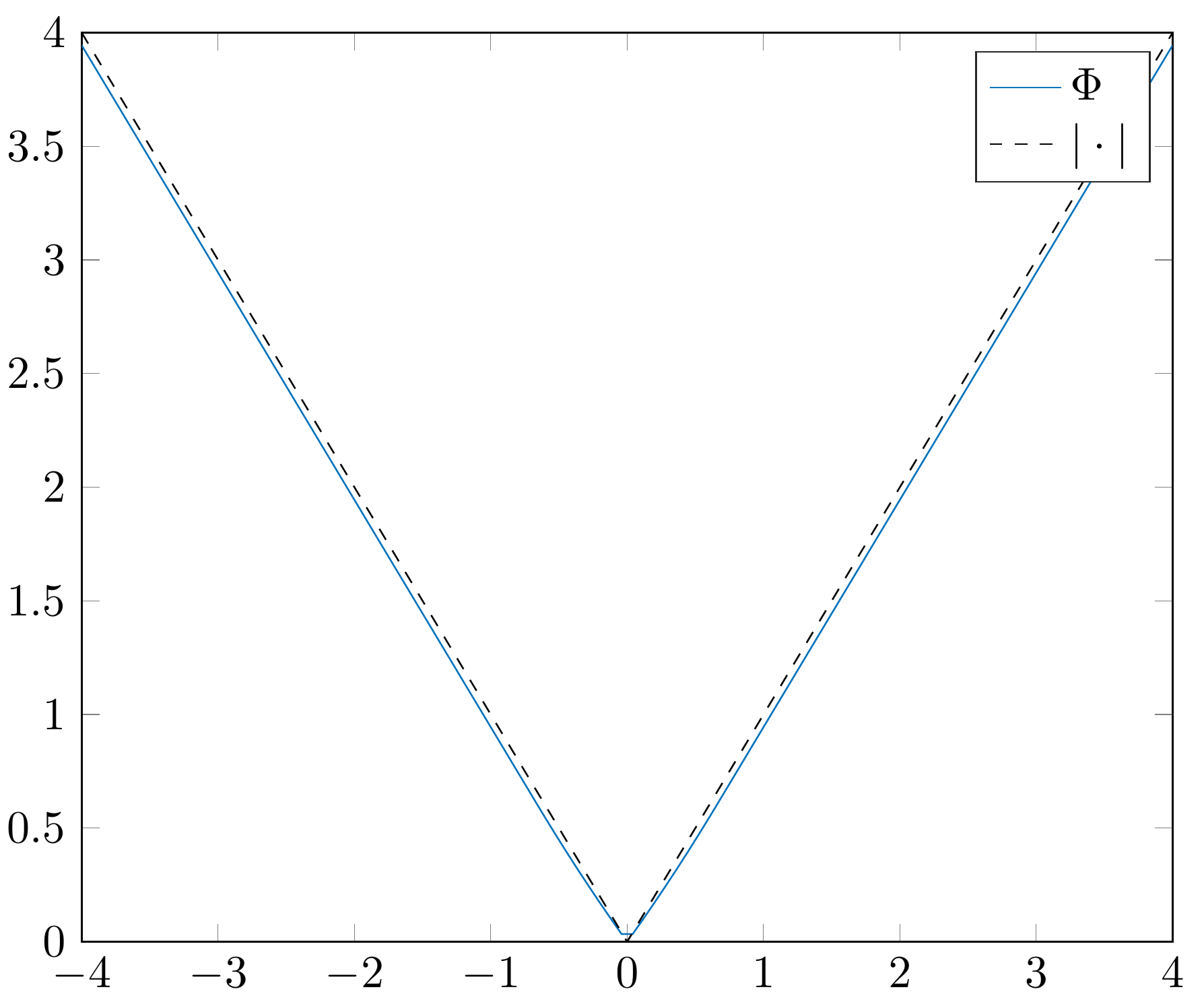}   \quad
  \includegraphics[width=.26\textwidth]{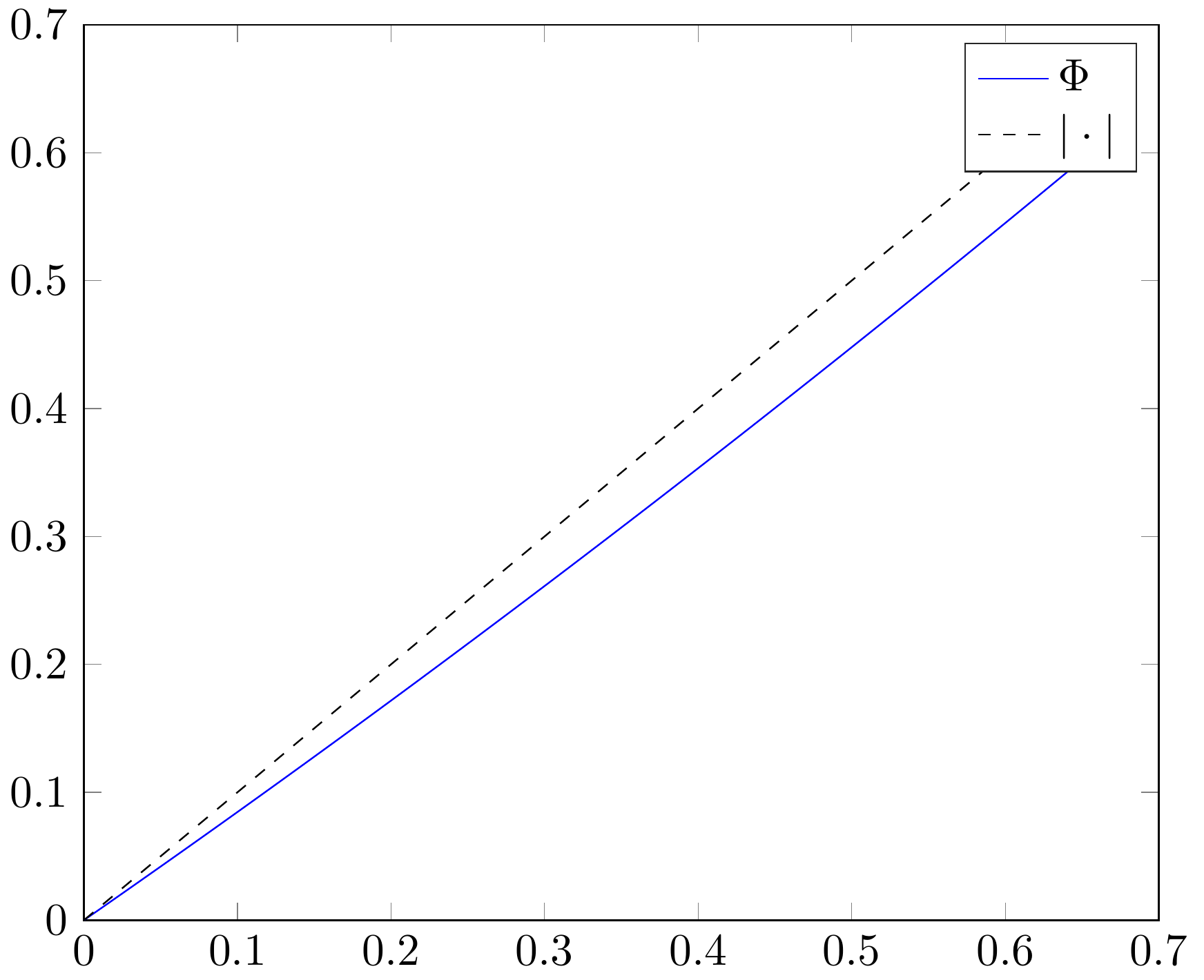}
     \\[1ex]
     \caption{Visualization of  $H(x)$ and $\Phi(x)$ in Example \ref{ex}.}
\label{figH}
\end{figure}

Before starting to inspect the function $H$ in (\ref{H}) more closely, we want to show that ${\mathbf T}^{\dagger} S_{\gamma} {\mathbf T}$ is a firmly
nonexpansive mapping if we take a suitable scalar product in ${\mathbb R}^{N}$.
 Since we have assumed that ${\mathbf T} \in {\mathbb R}^{L \times N}$ with $L \ge N$ has full rank $N$, it follows that ${\mathbf T}^{*}{\mathbf T}$ is positive definite, and we can define the new scalar product and the corresponding norm in ${\mathbb R}^{N}$  by
\begin{equation}\label{normneu}
\langle {\mathbf x}, \, {\mathbf y} \rangle_{{\mathbf T}} := {\mathbf x}^{T}\,  {\mathbf T}^{*}{\mathbf T}\,  {\mathbf y}, \qquad \|{\mathbf x}\|_{\mathbf T}^{2} := \| {\mathbf T}{\mathbf x}\|_{2}^{2}, \end{equation}
where here $\| {\mathbf T}{\mathbf x} \|_{2}$ denotes the Euclidian norm of ${\mathbf T}{\mathbf x}$ in ${\mathbb R}^{L}$.
Further, for any vector ${\mathbf z} \in {\mathbb R}^{L}$ we use the notation
$$ \textrm{sign}\, {\mathbf z} \coloneqq ( \textrm{sign} \, z_{j})_{j=1}^{L} \quad \textrm{with} \quad \textrm{sign} \, z_{j} \coloneqq \left\{\begin{array}{rl} 1 & z_{j} > 0, \\ -1 & z_{j} < 0, \\ \left[-1,1 \right] & z_{j} =0. \end{array} \right.
$$
Then we have

\begin{proposition}\label{prop}
Let ${\mathbf T} \in {\mathbb R}^{L \times N}$ with $L \ge N$ and full rank $N$.  Then  the operators
$ {\mathbf T}^{\dagger} S_{\gamma} {\mathbf T} $ and ${\mathbf I}- {\mathbf T}^{\dagger} S_{\gamma} {\mathbf T}$ are firmly non-expansive, i.e., for all ${\mathbf x}, {\mathbf y} \in {\mathbb R}^{N}$ we have
$$
\|{\mathbf T}^{\dagger} S_{\gamma} {\mathbf T} {\mathbf x} - {\mathbf T}^{\dagger} S_{\gamma} {\mathbf T} {\mathbf y}\|^{2}_{{\mathbf T}} + \|({\mathbf I} - {\mathbf T}^{\dagger} S_{\gamma} {\mathbf T}){\mathbf x}- ({\mathbf I} - {\mathbf T}^{\dagger} S_ {\gamma} {\mathbf T}){\mathbf y} \|^{2}_{{\mathbf T}}  \le \|{\mathbf x} - {\mathbf y} \|^{2}_{{\mathbf T}}.
$$
\end{proposition}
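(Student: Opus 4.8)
The plan is to reduce the claim to the well-known firm non-expansiveness of the soft shrinkage operator $S_{\gamma}$ on ${\mathbb R}^{L}$ equipped with its standard Euclidean inner product, and then transport this inequality to $({\mathbb R}^{N}, \langle \cdot, \cdot\rangle_{\mathbf T})$ through the transform ${\mathbf T}$. Since $S_{\gamma} = \textrm{prox}_{\gamma \|\cdot\|_{1}}$ on ${\mathbb R}^{L}$ (this is (\ref{soft}) applied in dimension $L$) and every proximity operator is firmly non-expansive, we already know that for all ${\mathbf u}, {\mathbf v} \in {\mathbb R}^{L}$,
$$ \|S_{\gamma}{\mathbf u} - S_{\gamma}{\mathbf v}\|_{2}^{2} + \|({\mathbf I} - S_{\gamma}){\mathbf u} - ({\mathbf I} - S_{\gamma}){\mathbf v}\|_{2}^{2} \le \|{\mathbf u} - {\mathbf v}\|_{2}^{2}. $$
Equivalently, this can be read off coordinatewise, since $S_{\gamma}$ acts separably and the scalar soft threshold on ${\mathbb R}$ is firmly non-expansive by a short case distinction.

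The key structural observation is that, because ${\mathbf T}$ has full column rank, $\Pi := {\mathbf T}{\mathbf T}^{\dagger} = {\mathbf T}({\mathbf T}^{*}{\mathbf T})^{-1}{\mathbf T}^{*}$ is the orthogonal projection of ${\mathbb R}^{L}$ onto the range of ${\mathbf T}$; in particular $\Pi$ is symmetric and idempotent, it fixes every vector in $\operatorname{range}({\mathbf T})$, and $\|\Pi {\mathbf w}\|_{2} \le \|{\mathbf w}\|_{2}$ for every ${\mathbf w} \in {\mathbb R}^{L}$. First I would substitute ${\mathbf u} := {\mathbf T}{\mathbf x}$ and ${\mathbf v} := {\mathbf T}{\mathbf y}$ and rewrite each of the three ${\mathbf T}$-norms appearing in the asserted inequality via $\|{\mathbf z}\|_{\mathbf T}^{2} = \|{\mathbf T}{\mathbf z}\|_{2}^{2}$ from (\ref{normneu}). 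This turns $\|{\mathbf x} - {\mathbf y}\|_{\mathbf T}^{2}$ into $\|{\mathbf u} - {\mathbf v}\|_{2}^{2}$, turns the first summand into $\|\Pi(S_{\gamma}{\mathbf u} - S_{\gamma}{\mathbf v})\|_{2}^{2}$, and — using ${\mathbf T}{\mathbf T}^{\dagger} = \Pi$ together with the fact that ${\mathbf u} - {\mathbf v} \in \operatorname{range}({\mathbf T})$ so that ${\mathbf u} - {\mathbf v} = \Pi({\mathbf u} - {\mathbf v})$ — turns the $({\mathbf I} - {\mathbf T}^{\dagger} S_{\gamma}{\mathbf T})$-summand into $\|\Pi(({\mathbf I} - S_{\gamma}){\mathbf u} - ({\mathbf I} - S_{\gamma}){\mathbf v})\|_{2}^{2}$.

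Then I would simply drop the projections using $\|\Pi {\mathbf w}\|_{2} \le \|{\mathbf w}\|_{2}$ on each of the two summands, and invoke the Euclidean firm non-expansiveness of $S_{\gamma}$ recalled above. This bounds the sum by $\|{\mathbf u} - {\mathbf v}\|_{2}^{2} = \|{\mathbf x} - {\mathbf y}\|_{\mathbf T}^{2}$, which is exactly the desired estimate. Because $S_{\gamma}$ and ${\mathbf I} - S_{\gamma}$ enter the Euclidean inequality symmetrically, the same computation delivers firm non-expansiveness of both ${\mathbf T}^{\dagger} S_{\gamma}{\mathbf T}$ and ${\mathbf I} - {\mathbf T}^{\dagger} S_{\gamma}{\mathbf T}$ simultaneously.

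I do not expect a serious obstacle here; the only point requiring care is the rewriting of the second summand, where it is essential to use that ${\mathbf u} - {\mathbf v}$ is fixed by $\Pi$ in order to pull the projection out as a common factor, and to apply the projection-norm inequality in the correct (norm-decreasing) direction \emph{before} invoking the inequality for $S_{\gamma}$. If one prefers to avoid citing firm non-expansiveness of proximity operators, the estimate for $S_{\gamma}$ can be verified directly and separably, since each scalar soft threshold satisfies the one-dimensional firm non-expansiveness inequality.
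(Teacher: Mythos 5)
Your proof is correct and follows essentially the same route as the paper: both transport the inequality to ${\mathbb R}^{L}$ via $\|\cdot\|_{\mathbf T}$, exploit that ${\mathbf T}{\mathbf T}^{\dagger}$ is an orthogonal projection (hence norm-non-increasing and fixing $\operatorname{range}({\mathbf T})$, so that ${\mathbf T}{\mathbf x}-{\mathbf T}{\mathbf y}$ is invariant under it), and reduce everything to the firm non-expansiveness of $S_{\gamma}$ in the Euclidean norm. The only cosmetic difference is that you factor the projection out of the second summand and invoke the sum-of-squares form of firm non-expansiveness of $S_{\gamma}$, whereas the paper expands the square and verifies the equivalent inner-product inequality $\|S_{\gamma}{\mathbf u}-S_{\gamma}{\mathbf v}\|_{2}^{2}\le\langle{\mathbf u}-{\mathbf v},\,S_{\gamma}{\mathbf u}-S_{\gamma}{\mathbf v}\rangle_{2}$ componentwise.
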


\begin{proof} Since ${\mathbf T} {\mathbf T}^{\dagger}  = {\mathbf T} ({\mathbf T}^{*} {\mathbf T})^{-1} {\mathbf T}^{*} \in {\mathbb R}^{L \times L}$ is a projector, it follows that
$$ \|{\mathbf T}^{\dagger} S_{\gamma} {\mathbf T} {\mathbf x} - {\mathbf T}^{\dagger} S_{\gamma} {\mathbf T} {\mathbf y}\|^{2}_{{\mathbf T}} = \|({\mathbf T} {\mathbf T}^{\dagger}  (S_{\gamma} {\mathbf T} {\mathbf x} - S_{\gamma} {\mathbf T} {\mathbf y}) \|_{2}^{2} \le \|S_{\gamma} {\mathbf T} {\mathbf x} - S_{\gamma} {\mathbf T} {\mathbf y} \|_{2}^{2}$$
as well as
\begin{eqnarray*}
& &  \|({\mathbf I} - {\mathbf T}^{\dagger} S_{\gamma} {\mathbf T}){\mathbf x}- ({\mathbf I} - {\mathbf T}^{\dagger} S_{\gamma} {\mathbf T}){\mathbf y} \|^{2}_{{\mathbf T}} =
\|{\mathbf T} ({\mathbf x} - {\mathbf y}) - {\mathbf T} {\mathbf T}^{\dagger}( S_{\gamma} {\mathbf T} {\mathbf x} - S_{\gamma} {\mathbf T} {\mathbf y}) \|_{2}^{2} \\
&\le& \|{\mathbf x} - {\mathbf y} \|^{2}_{{\mathbf T}} + \|S_{\gamma} {\mathbf T} {\mathbf x} - S_{\gamma} {\mathbf T} {\mathbf y} \|_{2}^{2} - 2 \langle {\mathbf T} ({\mathbf x} - {\mathbf y}) , {\mathbf T} {\mathbf T}^{\dagger}(S_{\gamma} {\mathbf T} {\mathbf x} - S_{\gamma} {\mathbf T} {\mathbf y}) \rangle_{2}.
\end{eqnarray*}
To prove the assertion of the proposition we use that  ${\mathbf T}^{*} {\mathbf T} {\mathbf T}^{\dagger}={\mathbf T}^{*}$. Then it suffices to show  that
$$ \|S_{\gamma} {\mathbf T} {\mathbf x} - S_{\gamma} {\mathbf T} {\mathbf y} \|_{2}^{2} -  \langle {\mathbf T} ({\mathbf x} - {\mathbf y}) , S_{\gamma} {\mathbf T} {\mathbf x} - S_{\gamma} {\mathbf T} {\mathbf y} \rangle_{2} \le 0. $$
But this assertion is obviously true since for each component $(S_{\gamma} {\mathbf T} {\mathbf x} - S_{\gamma} {\mathbf T} {\mathbf y})_{k}$,  $k=1, \ldots , L$, we have by (\ref{soft}) either
$(S_{\gamma} {\mathbf T} {\mathbf x} - S_{\gamma} {\mathbf T} {\mathbf y})_{k} = 0$
or $\textrm{sign}( S_{\gamma} {\mathbf T} {\mathbf x} - S_{\gamma} {\mathbf T}
{\mathbf y})_{k}) = \textrm{sign}( {\mathbf T} ({\mathbf x} - {\mathbf y}))_{k}$
and $| (S_{\gamma} {\mathbf T} {\mathbf x} - S_{\gamma} {\mathbf T} {\mathbf y})_{k}| \le |( {\mathbf T} ({\mathbf x} - {\mathbf y}))_{k}|$.
\end{proof}
\medskip

\begin{remark}
Proposition $\ref{prop}$ particularly implies that $ {\mathbf T}^{\dagger} S_{\gamma} {\mathbf T} $ and ${\mathbf I}- {\mathbf T}^{\dagger} S_{\gamma} {\mathbf T}$ are non-expansive with respect to $\| \cdot \|_{{\mathbf T}}$, see \cite{Bauschke_2011}.
\end{remark}

Now we inspect the function $H$ in (\ref{H}) and show first that it is well defined, i.e., for any fixed $\gamma >0$ and each ${\mathbf x} \in {\mathbb R}^{N}$ the set $H({\mathbf x})$ is not empty.
By (\ref{H}), we have ${\mathbf y} \in H({\mathbf x})$ if ${\mathbf x} = {\mathbf T}^{\dagger} S_{\gamma} {\mathbf T} ({\mathbf x} + {\mathbf y})$. With the substitution ${\mathbf t} = {\mathbf x} + {\mathbf y}$, we get the equivalent fixed point representation
$$
{\mathbf t} - {\mathbf x} \in H({\mathbf x}) \qquad  \Longleftrightarrow \qquad {\mathbf t} = {\mathbf x} + ({\mathbf I}_{N} -  {\mathbf T}^{\dagger} S_{\gamma} {\mathbf T}) \, {\mathbf t}. $$
Thus, if the function $f_{{\mathbf x}, {\mathbf T}}\colon {\mathbb R}^{N} \longrightarrow {\mathbb R}^{N}$ with
\begin{equation}\label{fxT}
 f_{{\mathbf x}, {\mathbf T}} ({\mathbf t}) \coloneqq {\mathbf x} + ({\mathbf I}_{N} - {\mathbf T}^{\dagger} S_{\gamma} {\mathbf T}) \, {\mathbf t}
\end{equation}
possesses a fixed point ${\mathbf t}$, then ${\mathbf y} = {\mathbf t}-{\mathbf x}$ is an element of $H({\mathbf x})$.

\begin{theorem}\label{Hwell}
Let ${\mathbf T} \in {\mathbb R}^{L \times N}$ with $L \ge N$ have full rank $N$ and let $\gamma >0$.
Then, for each ${\mathbf x} \in {\mathbb R}^{N}$, we have  $H({\mathbf x}) \neq \emptyset$.
Further,  the image of $H$ is bounded, i.e., for each ${\mathbf x} \in {\mathbb R}^{N}$ we have $H({\mathbf x}) \subset \{{\mathbf y} \in {\mathbb R}^{N}: \, \|{\mathbf y} \|_{2} \le \gamma \,  \sqrt{L}\, \|{\mathbf T} \|_{2} \}$, where $\|{\mathbf T}\|_{2}$ denotes the spectral norm of ${\mathbf T}$.
\end{theorem}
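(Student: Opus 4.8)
The plan is to derive both claims from one structural observation: the operator $\mathbf{G} := \mathbf{I}_N - \mathbf{T}^{\dagger} S_{\gamma}\mathbf{T}$ has a \emph{uniformly bounded range}. First I would use the full-rank identity $\mathbf{T}^{\dagger}\mathbf{T} = \mathbf{I}_N$ to rewrite
\[
\mathbf{G}\,\mathbf{t} = \mathbf{T}^{\dagger}\mathbf{T}\mathbf{t} - \mathbf{T}^{\dagger} S_{\gamma}\mathbf{T}\mathbf{t} = \mathbf{T}^{\dagger}\bigl(\mathbf{T}\mathbf{t} - S_{\gamma}(\mathbf{T}\mathbf{t})\bigr),
\]
and set $\mathbf{r}(\mathbf{t}) := \mathbf{T}\mathbf{t} - S_{\gamma}(\mathbf{T}\mathbf{t}) \in \mathbb{R}^{L}$. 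Reading off (\ref{soft}) componentwise, each entry of $\mathbf{r}(\mathbf{t})$ equals $\pm\gamma$ on the coordinates where thresholding is active and equals the sub-threshold value $(\mathbf{T}\mathbf{t})_k$ (with $|(\mathbf{T}\mathbf{t})_k|<\gamma$) otherwise. Hence $\|\mathbf{r}(\mathbf{t})\|_{\infty}\le\gamma$ and therefore $\|\mathbf{r}(\mathbf{t})\|_{2}\le\gamma\sqrt{L}$ for \emph{every} $\mathbf{t}$. This single cap is the engine for both parts of the statement.

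For non-emptiness I would invoke the fixed-point reformulation recorded just before the theorem: $\mathbf{y}\in H(\mathbf{x})$ holds iff $f_{\mathbf{x},\mathbf{T}}$ from (\ref{fxT}) has a fixed point $\mathbf{t}$, in which case $\mathbf{y}=\mathbf{t}-\mathbf{x}=\mathbf{G}\mathbf{t}$. Since $S_{\gamma}$ and $\mathbf{T}^{\dagger}$ are continuous, $f_{\mathbf{x},\mathbf{T}}$ is continuous, and by the previous paragraph $\|\mathbf{G}\mathbf{t}\|_{2}\le\rho$ for a constant $\rho$ independent of $\mathbf{t}$. Consequently $f_{\mathbf{x},\mathbf{T}}$ maps the closed Euclidean ball of radius $\|\mathbf{x}\|_{2}+\rho$ into itself, and Brouwer's fixed-point theorem produces a fixed point. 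This shows $H(\mathbf{x})\neq\emptyset$ for every $\mathbf{x}\in\mathbb{R}^{N}$, equivalently that $\mathbf{T}^{\dagger}S_{\gamma}\mathbf{T}$ is surjective.

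For the norm bound I would return to $\mathbf{y}=\mathbf{G}\mathbf{t}=\mathbf{T}^{\dagger}\mathbf{r}(\mathbf{t})$ and multiply by $\mathbf{T}^{*}\mathbf{T}$. Using $\mathbf{T}^{*}\mathbf{T}\mathbf{T}^{\dagger}=\mathbf{T}^{*}$ (already exploited in the proof of Proposition \ref{prop}) yields the clean identity
\[
\mathbf{T}^{*}\mathbf{T}\,\mathbf{y} = \mathbf{T}^{*}\mathbf{r}(\mathbf{t}),
\]
in which the frame matrix itself, through $\mathbf{T}^{*}$, governs the estimate. Bounding through the spectral norm gives $\|\mathbf{T}^{*}\mathbf{r}(\mathbf{t})\|_{2}\le\|\mathbf{T}\|_{2}\,\|\mathbf{r}(\mathbf{t})\|_{2}\le\gamma\sqrt{L}\,\|\mathbf{T}\|_{2}$. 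Since $\mathbf{T}^{*}\mathbf{T}$ is positive definite, this controls $\mathbf{y}$ and delivers the asserted inclusion $H(\mathbf{x})\subset\{\mathbf{y}\in\mathbb{R}^{N}:\|\mathbf{y}\|_{2}\le\gamma\sqrt{L}\,\|\mathbf{T}\|_{2}\}$.

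I expect the main obstacle to be the existence part rather than the estimate. The map $\mathbf{G}$ (equivalently $f_{\mathbf{x},\mathbf{T}}$) is only non-expansive in the $\|\cdot\|_{\mathbf{T}}$-geometry of Proposition \ref{prop}, and non-expansive self-maps of $\mathbb{R}^{N}$ need not possess fixed points, so non-expansiveness alone is insufficient. The decisive point is that soft thresholding caps $\mathbf{r}(\mathbf{t})$ in the sup-norm, forcing the range of $\mathbf{G}$ to lie in a fixed bounded set; this is precisely what converts $f_{\mathbf{x},\mathbf{T}}$ into a continuous self-map of a compact convex ball and, at the same time, supplies the uniform bound. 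The second delicate point is bookkeeping of the spectral-norm constant: one must route the estimate through $\mathbf{T}^{*}$ (via the identity $\mathbf{T}^{*}\mathbf{T}\mathbf{y}=\mathbf{T}^{*}\mathbf{r}(\mathbf{t})$) so that $\|\mathbf{T}\|_{2}$, and not some other operator norm, appears in the final bound.
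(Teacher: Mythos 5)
Your existence argument is correct and takes a genuinely different, more elementary route than the paper. Both proofs run on the same engine --- the componentwise cap $\|(\mathbf{I}_L-S_\gamma)\mathbf{s}\|_\infty\le\gamma$, hence $\|\mathbf{r}(\mathbf{t})\|_2\le\gamma\sqrt{L}$ uniformly in $\mathbf{t}$ --- but the paper shows that $f_{\mathbf{x},\mathbf{T}}$ maps $\mathbb{R}^N$ into the ball $\{\mathbf{t}:\|\mathbf{x}-\mathbf{t}\|_{\mathbf{T}}\le\gamma\sqrt{L}\}$ in the $\mathbf{T}$-norm, invokes the nonexpansiveness from Proposition \ref{prop}, and applies Browder's fixed point theorem, whereas you need only continuity of $f_{\mathbf{x},\mathbf{T}}$ plus a self-mapped compact Euclidean ball of radius $\|\mathbf{x}\|_2+\rho$, after which Brouwer finishes. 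In $\mathbb{R}^N$ this is perfectly sound, and your diagnosis is exactly right: nonexpansiveness alone would not produce a fixed point (a translation has none), so the bounded range of $\mathbf{I}_N-\mathbf{T}^{\dagger}S_\gamma\mathbf{T}$ is the decisive ingredient in both proofs. What the paper's heavier machinery buys is only that its ball is centered at $\mathbf{x}$ with radius $\gamma\sqrt{L}$, so it localizes the fixed point as $\|\mathbf{t}-\mathbf{x}\|_{\mathbf{T}}\le\gamma\sqrt{L}$ and thereby yields the bound on $H(\mathbf{x})$ in the same stroke.

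Your boundedness step, however, has a genuine gap. From $\mathbf{T}^{*}\mathbf{T}\mathbf{y}=\mathbf{T}^{*}\mathbf{r}(\mathbf{t})$ and $\|\mathbf{T}^{*}\mathbf{r}(\mathbf{t})\|_2\le\gamma\sqrt{L}\,\|\mathbf{T}\|_2$, positive definiteness of $\mathbf{T}^{*}\mathbf{T}$ gives only $\|\mathbf{y}\|_2\le\|(\mathbf{T}^{*}\mathbf{T})^{-1}\|_2\,\gamma\sqrt{L}\,\|\mathbf{T}\|_2=\gamma\sqrt{L}\,\|\mathbf{T}\|_2\,\|\mathbf{T}^{\dagger}\|_2^{2}$: the smallest eigenvalue of $\mathbf{T}^{*}\mathbf{T}$ is $\sigma_{\min}(\mathbf{T})^{2}$, which under the bare full-rank hypothesis can be arbitrarily small, so ``positive definite, hence controls $\mathbf{y}$'' does not deliver the constant $\gamma\sqrt{L}\,\|\mathbf{T}\|_2$. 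In fact no bound with that constant can hold: by Theorem \ref{H0}, $H(\mathbf{0})=\{\mathbf{y}:\|\mathbf{T}\mathbf{y}\|_\infty\le\gamma\}$, and for $\mathbf{T}=(\epsilon,\epsilon)^{T}$ (so $N=1$, $L=2$) this set is the interval $[-\gamma/\epsilon,\gamma/\epsilon]$ while $\gamma\sqrt{2}\,\|\mathbf{T}\|_2=2\gamma\epsilon$. The repair is to drop the detour through $\mathbf{T}^{*}\mathbf{T}$ and estimate your own identity $\mathbf{y}=\mathbf{T}^{\dagger}\mathbf{r}(\mathbf{t})$ directly: $\|\mathbf{y}\|_2\le\|\mathbf{T}^{\dagger}\|_2\,\gamma\sqrt{L}$, or in the aligned norm $\|\mathbf{y}\|_{\mathbf{T}}=\|\mathbf{T}\mathbf{T}^{\dagger}\mathbf{r}(\mathbf{t})\|_2\le\gamma\sqrt{L}$ since $\mathbf{T}\mathbf{T}^{\dagger}$ is an orthogonal projection. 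You are not alone in stumbling here: the paper's own closing display bounds $\|H(\mathbf{x})\|_{\mathbf{T}}\le\|\mathbf{T}\|_2\,\|f_{\mathbf{x},\mathbf{T}}(\mathbf{t})-\mathbf{x}\|_2$ with the norms mismatched relative to the stated $2$-norm claim, so the theorem's constant should be read as $\gamma\sqrt{L}\,\|\mathbf{T}^{\dagger}\|_2$; with that replacement your argument closes immediately.
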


\begin{proof}
To prove that $H({\mathbf x}) \neq \emptyset$, we show that  for each ${\mathbf x} \in {\mathbb R}^{N}$  the function $f_{{\mathbf x}, {\mathbf T}}$ possesses at least one fixed point.
We define the closed ball
$$ B({\mathbf x}, \gamma \sqrt{L}) \coloneqq \{{\mathbf t} \in {\mathbb R}^{N}:  \, \| {\mathbf x} - {\mathbf t}\|_{\mathbf T} \le \gamma \, \sqrt{L} \}. $$
Then, using ${\mathbf T} {\mathbf T}^{\dagger} {\mathbf T} = {\mathbf T}$ and $\| {\mathbf T} {\mathbf T}^{\dagger}\|_{2} = 1$, we have
\begin{align*}
\sup_{{\mathbf t} \in {\mathbb R}^{N}} \|{\mathbf x} - f_{{\mathbf x},{\mathbf T}}({\mathbf t})\|_{{\mathbf T}}
 &= \sup_{{\mathbf t} \in {\mathbb R}^{N}} \|({\mathbf I}_{N} - {\mathbf T}^{\dagger} S_{\gamma} {\mathbf T}) {\mathbf t}\|_{{\mathbf T}}  =
 \sup_{{\mathbf t} \in {\mathbb R}^{N}} \| ({\mathbf T}  - {\mathbf T} {\mathbf T}^{\dagger} S_{\gamma} {\mathbf T}) {\mathbf t}\|_{2} \\
&\le  \sup_{{\mathbf s} \in {\mathbb R}^{L}} \| {\mathbf s} - S_{\gamma} {\mathbf s}\|_{2} \le \gamma  \|{\mathbf 1}\|_{2} = \gamma \sqrt{L},
 \end{align*}
 where ${\mathbf 1} \in {\mathbb R}^{L}$ denotes the vector with ones as components, and where we have used the definition (\ref{soft}) of $S_{\gamma}$. Therefore, $f_{{\mathbf x}, {\mathbf T}}({\mathbf t}) \in B({\mathbf x}, \gamma \sqrt{L})$ for any ${\mathbf t} \in {\mathbb R}^{N}$.
Further, $f_{{\mathbf x}, {\mathbf T}}$ is nonexpansive with regard to $\| \cdot \|_{\mathbf T}$ by Proposition \ref{prop}. Since $S_{\gamma}$ and thus also $f_{{\mathbf x}, {\mathbf T}}$ is continuous, it follows by Browders  fixed point theorem that $f_{{\mathbf x}, {\mathbf T}}$ possesses a fixed point in $B({\mathbf x}, \gamma \sqrt{L})$, see \cite{Bauschke_2011}, Theorem 4.19.
Hence, for each ${\mathbf x} \in {\mathbb R}^{N}$ we have ${\mathbf y} = {\mathbf t} - {\mathbf x} \in H({\mathbf x})$ for all fixed points ${\mathbf t}$ of $f_{{\mathbf x}, {\mathbf T}}$, i.e. $H({\mathbf x}) \neq \emptyset$.
In particular  the image of $H$ is bounded, and we have
$$ \| H({\mathbf x}) \|_{\mathbf T} = \| f_{{\mathbf x}, {\mathbf T}}({\mathbf t}) - {\mathbf x}\|_{\mathbf T}  \le \| {\mathbf T}\|_{2} \| f_{{\mathbf x}, {\mathbf T}}({\mathbf t}) - {\mathbf x}\|_{2}
\le \gamma  \sqrt{L} \, \| {\mathbf T}\|_{2}$$
as seen above.
\end{proof}
\medskip

Thus, we conclude that the mapping ${H}\colon\mathbb{R}^N\rightrightarrows\mathbb{R}^N$ in (\ref{H}) is well defined.
In the remaining part of this section we prove some further properties of $H({\mathbf x})$ which show that $H$ behaves similarly as the subdifferential of $\|{\mathbf T} \cdot\|_{1}$ for an orthogonal transform matrix ${\mathbf T}$. 
Next we show that $H({\mathbf 0})$ is indeed not single-valued.

\begin{theorem}\label{H0}
  Let ${\mathbf T} \in\mathbb{R}^{L\times N}$  with $L \ge N$ with full rank $N$.
  Further let $\gamma > 0$ and  $H$  as in $(\ref{H})$.
  Then  ${\mathbf y} \in H({\mathbf 0})$ if and only if $\|{\mathbf T} \, {\mathbf y}\|_\infty \leq \gamma$,
  where
  $ \|{\mathbf T} \, {\mathbf y}\|_{\infty} \coloneqq \max\limits_{j \in \{1, \ldots, L\}} |[{\mathbf T}{\mathbf y}]_{j}|.$
\end{theorem}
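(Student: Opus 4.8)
The plan is to unwind the defining relation (\ref{H}) at $\mathbf{x}=\mathbf{0}$ and to reduce the claim to a single orthogonality identity. Setting $\mathbf{x}=\mathbf{0}$ in (\ref{H}) shows that $\mathbf{y}\in H(\mathbf{0})$ holds precisely when $\mathbf{T}^{\dagger}S_{\gamma}\mathbf{T}\mathbf{y}=\mathbf{0}$. Since $\mathbf{T}$ has full rank $N$, we may use $\mathbf{T}^{\dagger}=(\mathbf{T}^{*}\mathbf{T})^{-1}\mathbf{T}^{*}$ with $(\mathbf{T}^{*}\mathbf{T})^{-1}$ invertible, so the equation $\mathbf{T}^{\dagger}S_{\gamma}\mathbf{T}\mathbf{y}=\mathbf{0}$ is equivalent to $\mathbf{T}^{*}S_{\gamma}\mathbf{T}\mathbf{y}=\mathbf{0}$. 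Abbreviating $\mathbf{z}:=\mathbf{T}\mathbf{y}$, the membership $\mathbf{y}\in H(\mathbf{0})$ thus becomes the single condition $\mathbf{T}^{*}S_{\gamma}(\mathbf{z})=\mathbf{0}$, which says that $S_{\gamma}(\mathbf{z})$ lies in $\ker(\mathbf{T}^{*})=(\operatorname{range}\mathbf{T})^{\perp}$. I will prove the two implications separately against this reformulation.

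For the direction ``$\Leftarrow$'', assume $\|\mathbf{T}\mathbf{y}\|_{\infty}\le\gamma$. By the definition (\ref{soft}) of $S_{\gamma}$, a component is sent to zero as soon as its absolute value is at most $\gamma$ (note that the boundary cases $z_{j}=\pm\gamma$ also yield $[S_{\gamma}(\mathbf{z})]_{j}=0$). Hence $\|\mathbf{z}\|_{\infty}\le\gamma$ forces $S_{\gamma}(\mathbf{z})=\mathbf{0}$, so a fortiori $\mathbf{T}^{*}S_{\gamma}(\mathbf{z})=\mathbf{0}$, i.e.\ $\mathbf{y}\in H(\mathbf{0})$. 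This direction is immediate and requires no structural property of $\mathbf{T}$ beyond the reduction above.

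The substantial direction is ``$\Rightarrow$'', and this is where I expect the main work to lie. Assume $\mathbf{y}\in H(\mathbf{0})$, so $\mathbf{T}^{*}S_{\gamma}(\mathbf{z})=\mathbf{0}$ with $\mathbf{z}=\mathbf{T}\mathbf{y}$. The key idea is to test the vector $S_{\gamma}(\mathbf{z})$ against $\mathbf{z}$ itself in the Euclidean inner product on $\mathbb{R}^{L}$. On one hand,
$$ \langle \mathbf{z},\, S_{\gamma}(\mathbf{z})\rangle_{2} = \langle \mathbf{T}\mathbf{y},\, S_{\gamma}(\mathbf{z})\rangle_{2} = \mathbf{y}^{T}\,\mathbf{T}^{*}S_{\gamma}(\mathbf{z}) = 0, $$
since $\mathbf{T}^{*}S_{\gamma}(\mathbf{z})=\mathbf{0}$. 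On the other hand, I will expand the same inner product componentwise and exploit the sign structure of soft shrinkage: for every index $j$, the definition (\ref{soft}) gives $z_{j}\,[S_{\gamma}(\mathbf{z})]_{j}=0$ whenever $|z_{j}|\le\gamma$, while $z_{j}\,[S_{\gamma}(\mathbf{z})]_{j}>0$ whenever $|z_{j}|>\gamma$ (in the latter case $z_{j}$ and $[S_{\gamma}(\mathbf{z})]_{j}$ carry the same sign). Consequently $\langle \mathbf{z},S_{\gamma}(\mathbf{z})\rangle_{2}=\sum_{j=1}^{L} z_{j}[S_{\gamma}(\mathbf{z})]_{j}$ is a sum of nonnegative terms that vanishes if and only if $|z_{j}|\le\gamma$ for every $j$. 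Combining the two computations, the sum must be zero, whence $|[\mathbf{T}\mathbf{y}]_{j}|\le\gamma$ for all $j$, that is $\|\mathbf{T}\mathbf{y}\|_{\infty}\le\gamma$. The crux of the argument is precisely this pairing: the condition $\mathbf{y}\in H(\mathbf{0})$ places $S_{\gamma}(\mathbf{T}\mathbf{y})$ orthogonal to $\operatorname{range}\mathbf{T}$, which contains $\mathbf{T}\mathbf{y}$ itself, and the elementary componentwise nonnegativity of $z_{j}[S_{\gamma}(\mathbf{z})]_{j}$ then upgrades this orthogonality to the sharp sup-norm bound.
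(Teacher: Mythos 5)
Your proof is correct. The reduction to $\mathbf{T}^{*}S_{\gamma}(\mathbf{T}\mathbf{y})=\mathbf{0}$ and the easy direction match the paper exactly; for the substantial direction your argument is, at bottom, the same pairing the paper uses, but executed more cleanly. The paper partitions the indices into $I_{1},I_{2},I_{3}$, argues by contradiction, extracts a linear dependence among the vectors $\mathbf{T}^{*}\mathbf{e}_{j}$, solves for one $\mathbf{T}^{*}\mathbf{e}_{j_{1}}$, and only then pairs against $\mathbf{y}$ to obtain a sign contradiction for the single component $[\mathbf{T}\mathbf{y}]_{j_{1}}$; after clearing denominators that computation is precisely your identity
\begin{equation*}
0=\langle \mathbf{y},\,\mathbf{T}^{*}S_{\gamma}(\mathbf{z})\rangle_{2}=\sum_{j=1}^{L}z_{j}\,[S_{\gamma}(\mathbf{z})]_{j},
\end{equation*}
combined with the observation that each summand is nonnegative and strictly positive exactly when $|z_{j}|>\gamma$. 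Your version avoids the case distinction, the need to check that the $\mathbf{T}^{*}\mathbf{e}_{j}$ are nonzero, and the division by $[\mathbf{T}\mathbf{y}]_{j_{1}}-\gamma$, at no loss of generality; the only point worth stating explicitly (which you do) is that the boundary cases $z_{j}=\pm\gamma$ still give $[S_{\gamma}(\mathbf{z})]_{j}=0$ under the paper's convention (\ref{soft}), so the equality case of the sum is exactly $\|\mathbf{z}\|_{\infty}\le\gamma$.
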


\begin{proof}
  We recall that ${\mathbf T}^\dagger {\mathbf T} = ({\mathbf T}^* {\mathbf T})^{-1}{\mathbf T}^* {\mathbf T}  = {\mathbf I}_{N}$ as ${\mathbf T}$ has full rank $N$.\\
First, let $\|{\mathbf T} \, {\mathbf y}\|_\infty \leq \gamma$.
  Then the definition of $S_{\gamma}$ in (\ref{soft}) implies  $S_\gamma {\mathbf T}\, {\mathbf y} = {\mathbf 0}$ and hence also ${\mathbf T}^\dagger S_\gamma {\mathbf T} \, ({\mathbf y}+ {\mathbf 0}) = {\mathbf 0}$, that is, $ {\mathbf y} \in H({\mathbf 0})$.\\[1ex]
Second, let ${\mathbf y} \in H({\mathbf 0})$, i.e.,
  \begin{equation}\label{tst0}
   {\mathbf T}^\dagger S_\gamma {\mathbf T} \, {\mathbf y} = {\mathbf 0}.
  \end{equation}
  We show that then $\|{\mathbf T} \, {\mathbf y}\|_\infty \leq \gamma$.
  We consider the components $[{\mathbf T} \, {\mathbf y}]_{j}$, $j=1, \ldots , L$, and define three index sets $I_1, I_2, I_3$ that form a partition of $\{1,\ldots, L\}$,
  \begin{align*}
    I_1 & \coloneqq \{1\leq j \leq L : ({\mathbf T}\, {\mathbf y})_j > \gamma \},\\
    I_2 & \coloneqq\{1\leq j \leq L : ({\mathbf T}\, {\mathbf y})_j < -\gamma \},\\
    I_3 & \coloneqq\{1\leq j \leq L : ({\mathbf T}\, {\mathbf y})_j \in [-\gamma, \gamma] \}.
  \end{align*}
Suppose that by contrast $\|{\mathbf T}{\mathbf y}\|_\infty > \gamma$, which means that $I_1 \cup I_2 \neq \emptyset$.
  Then
  \begin{equation}\label{stysum}
    S_\gamma {\mathbf T}\, {\mathbf y} = \sum_{j\in I_1}([{\mathbf T}\, {\mathbf y}]_{j} - \gamma) \, {\mathbf e}_{j} + \sum_{j\in I_2}([{\mathbf T}\,{\mathbf y}]_{j} + \gamma) \, {\mathbf e}_{j},
  \end{equation}
  where ${\mathbf e}_{j}$ denotes the $j$-th unit vector  in ${\mathbb R}^{L}$.
  Now we combine \eqref{stysum} with \eqref{tst0}  and use the invertibility of ${\mathbf T}^{*} {\mathbf T}$ to get
  \begin{equation}
{\mathbf 0}  = ({\mathbf T}^{*} {\mathbf T}){\mathbf T}^\dagger S_\gamma {\mathbf T} \, {\mathbf y}
    = \sum_{j\in I_1}([{\mathbf T}\, {\mathbf y}]_{j} - \gamma)\,{\mathbf T}^* {\mathbf e}_{j} + \sum_{j\in I_2}([{\mathbf T}\, {\mathbf y}]_{j} + \gamma)\,{\mathbf T}^* {\mathbf e}_{j}.
    \label{sumte}
  \end{equation}
  In other words, the set $\{{\mathbf T}^{*}{\mathbf e}_{j} : j \in I_1 \cup I_2\}$ is linearly dependent.
  At the same time, none of these vectors ${\mathbf T}^{*}{\mathbf e}_{j}$ vanishes because  ${\mathbf T}^{*}{\mathbf e}_{j} = {\mathbf 0}$ for  $j\in I_1 \cup I_2$ leads to the following contradiction,
$$
    0 =  \left|\langle {\mathbf T}^{*}{\mathbf e}_j,  {\mathbf y}\rangle_{2}\right| = \left|\langle {\mathbf e}_j,  {\mathbf T}\, {\mathbf y}\rangle_{2}\right|
     =\left|[{\mathbf T}{\mathbf y}]_j\right| > \gamma.
$$
Without loss of generality  assume that $I_1\neq\emptyset$ and choose $j_1\in I_1$.
  Then $[{\mathbf T}{\mathbf y}]_{j_1}> \gamma$ and \eqref{sumte} implies
  \begin{equation}\label{vjtilde}
    {\mathbf T}^{*} {\mathbf e}_{j_{1}} = - \sum_{\substack{j\in I_1\\j\neq j_1}} \frac{[{\mathbf T}{\mathbf y}]_j- \gamma}{[{\mathbf T}{\mathbf y}]_{j_1} - \gamma} \, {\mathbf T}^{*} \, {\mathbf e}_{j} - ~\sum_{j\in I_2} \frac{[{\mathbf T}{\mathbf y}]_j + \gamma}{[{\mathbf T}{\mathbf y}]_{j_1} - \gamma}\,{\mathbf T}^{*} \, {\mathbf e}_{j}.
  \end{equation}
A closer look at the coefficients shows that
  \begin{equation*}
    \frac{[{\mathbf T}{\mathbf y}]_j- \gamma}{[{\mathbf T}{\mathbf y}]_{j_1} - \gamma} > 0 \text{~for~} j\in I_1\setminus \{j_1\}
    \qquad\text{and}\qquad
    \frac{[{\mathbf T}{\mathbf y}]_j + \gamma}{[{\mathbf T}{\mathbf y}]_{j_1} - \gamma} < 0 \text{~for~} j\in I_2.
  \end{equation*}
Hence, 
  \begin{align*}
    [{\mathbf T}{\mathbf y}]_{j_1}
    &=\langle {\mathbf T}{\mathbf y}, {\mathbf e}_{j_1}\rangle_{2} = \langle {\mathbf y},  {\mathbf T}^{*} {\mathbf e}_{j_{1}} \rangle_{2}\\
    &=\Big\langle {\mathbf y},\,  \Big( - \sum_{\substack{j\in I_1\\j\neq j_1}} \frac{[{\mathbf T}{\mathbf y}]_j- \gamma}{[{\mathbf T}{\mathbf y}]_{j_1} - \gamma} \, {\mathbf T}^{*} {\mathbf e}_{j} - \sum_{j\in I_2} \frac{[{\mathbf T}{\mathbf y}]_j+ \gamma}{[{\mathbf T}{\mathbf y}]_{j_1} - \gamma}\, {\mathbf T}^{*} {\mathbf e}_{j} \Big)\Big\rangle_{2} \\
    &= - \sum_{\substack{j \in I_1\\j \neq j_1}} \underbrace{\frac{[{\mathbf T}{\mathbf y}]_j- \gamma}{[{\mathbf T}{\mathbf y}]_{j_1} - \gamma}}_{> 0}\,\underbrace{\langle {{\mathbf T} \, {\mathbf y}, {\mathbf e}_j\rangle}_{2}}_{>\gamma}
   -
    \sum_{\substack{j \in I_2}} \underbrace{\frac{[{\mathbf T}{\mathbf y}]_j+ \gamma}{[{\mathbf T}{\mathbf y}]_{j_1} - \gamma}}_{< 0}\,\underbrace{\langle {\mathbf T} \, {\mathbf y}, {\mathbf e}_j\rangle_{2}}_{<-\gamma}
    <0,
  \end{align*}
which contradicts the above assumption that $j_1 \in I_1$.
Thus, $I_1\cup I_2 = \emptyset$, i.e.,  all indices are located within $I_3$, which readily shows that $\|{\mathbf T}{\mathbf y}\|_\infty \leq \gamma$.
\end{proof}
\medskip

Further, if all components of ${\mathbf T} {\mathbf x}$ have a modulus  greater than $\gamma (\| {\mathbf T} {\mathbf T}^{\dagger} \|_{\infty} + 1)$, we show that $H({\mathbf x})$ is single-valued.
Here $\|{\mathbf T}{\mathbf T}^{\dagger}\|_{\infty} \coloneqq \max_{\|{\mathbf z}\|_{\infty}=1}  \|{\mathbf T}{\mathbf T}^{\dagger} {\mathbf z}\|_{\infty}$ denotes the usual  row-sum norm and $\|{\mathbf z}\|_{\infty} \coloneqq \max_{j=1, \ldots , L}|z_{j}|$ for ${\mathbf z}=(z_{j})_{j=1}^{L} \in {\mathbb R}^{L}$.

\begin{theorem}
 Let ${\mathbf T} \in\mathbb{R}^{L\times N}$  with $L \ge N$ with full rank $N$.
  Further let $\gamma > 0$ and  ${\mathcal U}_{{\mathbf T}, \gamma} \coloneqq\{{\mathbf x} \in {\mathbb R}^{N}: \, |({\mathbf T} {\mathbf x})_{j} | > \gamma  (\| {\mathbf T} \, {\mathbf T}^{\dagger} \|_{\infty} + 1) \; \forall \, j=1, \ldots , L\}$.
  Then   $H({\mathbf x})$ in $(\ref{H})$ is single-valued for ${\mathbf x} \in {\mathcal U}_{{\mathbf T}, \gamma}$
and we have  $H({\mathbf x}) = \gamma \, {\mathbf T}^{\dagger} \mathrm{sign} ({\mathbf T} {\mathbf x})$.
\end{theorem}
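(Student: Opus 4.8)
The plan is to take an arbitrary $\mathbf{y} \in H(\mathbf{x})$ --- which exists by Theorem \ref{Hwell} --- and show that it is forced to equal $\gamma\,\mathbf{T}^\dagger\,\mathrm{sign}(\mathbf{T}\mathbf{x})$; together with non-emptiness this proves both single-valuedness and the claimed formula in one stroke. First I would note that for $\mathbf{x}\in\mathcal{U}_{\mathbf{T},\gamma}$ every component of $\mathbf{T}\mathbf{x}$ has modulus strictly larger than $\gamma(\|\mathbf{T}\mathbf{T}^\dagger\|_\infty+1)>0$, so that $\mathbf{s}:=\mathrm{sign}(\mathbf{T}\mathbf{x})\in\{-1,1\}^L$ is a genuine vector rather than a set-valued object; this is what makes $\gamma\,\mathbf{T}^\dagger\mathbf{s}$ a single point.

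Next I would fix such a $\mathbf{y}\in H(\mathbf{x})$ and set $\mathbf{t}:=\mathbf{x}+\mathbf{y}$ and $w:=\mathbf{T}\mathbf{t}$. By the defining relation $(\ref{H})$ we have $\mathbf{x}=\mathbf{T}^\dagger S_\gamma w$; multiplying by $\mathbf{T}$ and writing $\mathbf{P}:=\mathbf{T}\mathbf{T}^\dagger$ for the (orthogonal) projector onto $\mathrm{range}(\mathbf{T})$ used already in Proposition \ref{prop}, this becomes $\mathbf{T}\mathbf{x}=\mathbf{P}\,S_\gamma w$. Since $w\in\mathrm{range}(\mathbf{T})$ we also have $\mathbf{P}w=w$, whence $w-\mathbf{T}\mathbf{x}=\mathbf{P}(w-S_\gamma w)$.

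The crux is to estimate this deviation in the $\infty$-norm. Component-wise one checks directly from the definition $(\ref{soft})$ of $S_\gamma$ that $|(w-S_\gamma w)_j|\le\gamma$ for every $j$, so $\|w-S_\gamma w\|_\infty\le\gamma$, and applying the row-sum norm yields $\|w-\mathbf{T}\mathbf{x}\|_\infty=\|\mathbf{P}(w-S_\gamma w)\|_\infty\le\gamma\,\|\mathbf{T}\mathbf{T}^\dagger\|_\infty$. This is precisely the step for which the threshold defining $\mathcal{U}_{\mathbf{T},\gamma}$ is calibrated, and I expect it to be the main obstacle, since it is where the injectivity of $\mathbf{T}$ (entering through $\mathbf{P}w=w$) and the exact constant must interact. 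Indeed, because $|(\mathbf{T}\mathbf{x})_j|>\gamma(\|\mathbf{T}\mathbf{T}^\dagger\|_\infty+1)$ while $|w_j-(\mathbf{T}\mathbf{x})_j|\le\gamma\|\mathbf{T}\mathbf{T}^\dagger\|_\infty$, the term $(\mathbf{T}\mathbf{x})_j$ dominates the perturbation, forcing $\mathrm{sign}(w_j)=s_j$ and $|w_j|>\gamma$ for every $j$.

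Once the sign pattern of $w$ is pinned down, $S_\gamma$ acts affinely on $w$, namely $S_\gamma w=w-\gamma\,\mathbf{s}$. Substituting this back and using $\mathbf{T}^\dagger\mathbf{T}=\mathbf{I}_N$ gives $\mathbf{x}=\mathbf{T}^\dagger(w-\gamma\mathbf{s})=\mathbf{t}-\gamma\,\mathbf{T}^\dagger\mathbf{s}$, so that $\mathbf{y}=\mathbf{t}-\mathbf{x}=\gamma\,\mathbf{T}^\dagger\mathbf{s}=\gamma\,\mathbf{T}^\dagger\mathrm{sign}(\mathbf{T}\mathbf{x})$. As $\mathbf{y}\in H(\mathbf{x})$ was arbitrary, this shows that $H(\mathbf{x})$ is the singleton $\{\gamma\,\mathbf{T}^\dagger\mathrm{sign}(\mathbf{T}\mathbf{x})\}$, which is exactly the assertion. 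Note that no separate verification of the converse inclusion is needed, since the uniqueness argument combined with $H(\mathbf{x})\neq\emptyset$ already yields the full claim.
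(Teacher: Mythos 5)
Your proof is correct, and it is genuinely more direct than the one in the paper, although both hinge on the same key estimate. The shared core is the bound $\|\mathbf{T}\mathbf{t}-\mathbf{T}\mathbf{x}\|_\infty=\|\mathbf{T}\mathbf{T}^\dagger(\mathbf{T}\mathbf{t}-S_\gamma\mathbf{T}\mathbf{t})\|_\infty\le\gamma\|\mathbf{T}\mathbf{T}^\dagger\|_\infty$, which together with the defining inequality of $\mathcal{U}_{\mathbf{T},\gamma}$ forces $\mathrm{sign}(\mathbf{T}\mathbf{t})=\mathrm{sign}(\mathbf{T}\mathbf{x})$ and $|[\mathbf{T}\mathbf{t}]_j|>\gamma$ for all $j$; this is exactly the paper's relation (\ref{gam}). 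Where you diverge is in how single-valuedness is obtained. The paper argues by contradiction: it assumes two distinct fixed points $\mathbf{t}_1\neq\mathbf{t}_2$ of $f_{\mathbf{x},\mathbf{T}}$, derives $\mathbf{T}^*(S_\gamma\mathbf{T}\mathbf{t}_1-S_\gamma\mathbf{T}\mathbf{t}_2)=\mathbf{0}$, and then runs a sign-and-index-set argument on the vectors $\mathbf{T}^*\mathbf{e}_j$ (parallel to the proof of Theorem \ref{H0}) to conclude $S_\gamma\mathbf{T}\mathbf{t}_1=S_\gamma\mathbf{T}\mathbf{t}_2$ and hence $\mathbf{t}_1=\mathbf{t}_2$; only afterwards does it compute the value of $H(\mathbf{x})$. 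You instead take an arbitrary $\mathbf{y}\in H(\mathbf{x})$, use the sign stability to linearize $S_\gamma$ on $w=\mathbf{T}(\mathbf{x}+\mathbf{y})$ as $S_\gamma w=w-\gamma\,\mathrm{sign}(\mathbf{T}\mathbf{x})$, and read off $\mathbf{y}=\gamma\,\mathbf{T}^\dagger\mathrm{sign}(\mathbf{T}\mathbf{x})$ directly from $\mathbf{T}^\dagger\mathbf{T}=\mathbf{I}_N$, so uniqueness and the explicit formula come out in a single computation with no contradiction argument and no recourse to linear dependence of the $\mathbf{T}^*\mathbf{e}_j$. Your route buys brevity and makes transparent why the constant $\gamma(\|\mathbf{T}\mathbf{T}^\dagger\|_\infty+1)$ is the right threshold; the paper's route has the minor virtue of reusing verbatim the combinatorial machinery already set up for Theorem \ref{H0}. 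All individual steps you use (the projector identity $\mathbf{T}\mathbf{T}^\dagger\mathbf{T}=\mathbf{T}$, the componentwise bound $\|w-S_\gamma w\|_\infty\le\gamma$, submultiplicativity of the row-sum norm) are sound, so I see no gap.
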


\begin{proof} 
By Theorem   \ref{Hwell},  $H({\mathbf x})$ in not empty for each ${\mathbf x} \in {\mathbb R}^{N}$, and each fixed point  of
$f_{{\mathbf x},{\mathbf T}}$ in (\ref{fxT}) provides us an element ${\mathbf y} = {\mathbf t} - {\mathbf x} \in H({\mathbf x})$.
We show that  $f_{{\mathbf x}, {\mathbf T}}$ possesses only one fixed point, if ${\mathbf x} \in {\mathcal U}_{{\mathbf T}, \gamma}$.
Assume by contrast that $f_{{\mathbf x},{\mathbf T}}$
possesses two fixed points ${\mathbf t}_{1}$ and ${\mathbf t}_{2}$ with ${\mathbf t}_{1} \neq {\mathbf t}_{2}$ .
Thus
$$ {\mathbf t}_{1} = {\mathbf x} + ({\mathbf I}_{N} - {\mathbf T}^{\dagger} S_{\gamma}{\mathbf T}) {\mathbf t}_{1}, \qquad {\mathbf t}_{2} = {\mathbf x} + ({\mathbf I}_{N} - {\mathbf T}^{\dagger} S_{\gamma}{\mathbf T}) {\mathbf t}_{2}, $$
implies
$ {\mathbf x} = {\mathbf T}^{\dagger} S_{\gamma}{\mathbf T} {\mathbf t}_{1} = {\mathbf T}^{\dagger} S_{\gamma}{\mathbf T} {\mathbf t}_{2}.$
Consequently, since ${\mathbf T}^{*} {\mathbf T}$ is invertible, we have
\begin{equation}\label{t12}
{\mathbf T}^{*} S_{\gamma}{\mathbf T} {\mathbf t}_{1} - {\mathbf T}^{*} S_{\gamma}{\mathbf T} {\mathbf t}_{2} = {\mathbf 0}.
\end{equation}
Observe that  for ${\mathbf x} \in {\mathcal U}_{{\mathbf T}, \gamma}$ it follows that $\textrm{sign}\,  {\mathbf T}\, {\mathbf x} = \textrm{sign}\, {\mathbf T}\, {\mathbf t}_{1}= \textrm{sign} \, {\mathbf T}\,{\mathbf t}_{2}$ and in particular
\begin{equation}\label{gam}
 |[{\mathbf T} {\mathbf t}_{1}]_{j}| >\gamma, \qquad |[{\mathbf T} {\mathbf t}_{2}]_{j}| >\gamma, \qquad \textrm{for all} \quad  j=1, \ldots, L,
 \end{equation}
  since for ${\mathbf t} \in \{{\mathbf t}_{1}, \, {\mathbf t}_{2}\}$,
$$
\|{\mathbf T}{\mathbf x} - {\mathbf T}{\mathbf t}\|_{\infty} = \|{\mathbf T}{\mathbf x} - {\mathbf T}f_{{\mathbf x},{\mathbf T}} ({\mathbf t})\|_{\infty}
= \|{\mathbf T}{\mathbf T}^{\dagger}( {\mathbf T}- S_{\gamma} {\mathbf T}) {\mathbf t} \|_{\infty} \le \|{\mathbf T}{\mathbf T}^{\dagger}\|_{\infty} \, \gamma.
$$
Now
we will show that ${\mathbf T}^{*}(S_{\gamma} {\mathbf T} {\mathbf t}_{1} - S_{\gamma} {\mathbf T} {\mathbf t}_{2}) ={\mathbf 0}$ implies
$S_{\gamma} {\mathbf T} {\mathbf t}_{1} - S_{\gamma} {\mathbf T} {\mathbf t}_{2} = {\mathbf 0}$. By (\ref{gam}) we can then conclude that ${\mathbf T} {\mathbf t}_{1} = {\mathbf T} {\mathbf t}_{2}$  and thus ${\mathbf t}_{1}= {\mathbf t}_{2}$, which leads to the wanted contradiction.
We consider the index sets
\begin{align*}
I_{1} &= \{1 \le j \le L: \, [S_{\gamma} {\mathbf T} {\mathbf t}_{1} - S_{\gamma} {\mathbf T} {\mathbf t}_{2}]_{j} > 0 \}, \\
I_{2} &= \{1 \le j \le L: \, [S_{\gamma} {\mathbf T} {\mathbf t}_{1} - S_{\gamma} {\mathbf T} {\mathbf t}_{2}]_{j} =0 \}, \\
I_{3} &= \{1 \le j \le L: \, [S_{\gamma} {\mathbf T} {\mathbf t}_{1} - S_{\gamma} {\mathbf T} {\mathbf t}_{2}]_{j} <0 \},
\end{align*}
and will show that $I_{1} \cup I_3 = \emptyset$.
Relation (\ref{t12}) implies
$$ {\mathbf 0} = \sum_{j\in I_1}  [S_{\gamma} {\mathbf T} {\mathbf t}_{1} - S_{\gamma} {\mathbf T} {\mathbf t}_{2}]_{j} {\mathbf T}^{*} {\mathbf e}_{j} + \sum_{j\in I_3}  [S_{\gamma} {\mathbf T} {\mathbf t}_{1} - S_{\gamma} {\mathbf T} {\mathbf t}_{2}]_{j} {\mathbf T}^{*} {\mathbf e}_{j}.
$$
Now, suppose contrarily that w.l.o.g.\ $I_{1} \neq \emptyset$, then for $j_{1} \in I_{1}$ we find similarly as in (\ref{vjtilde})
$$ {\mathbf T}^{*} {\mathbf e}_{j_{1}}
= \sum_{\substack{j\in I_1\\j\neq j_1}}  \frac{-[S_{\gamma} {\mathbf T} {\mathbf t}_{1} - S_{\gamma} {\mathbf T} {\mathbf t}_{2}]_{j}}
{[S_{\gamma} {\mathbf T} {\mathbf t}_{1} - S_{\gamma} {\mathbf T} {\mathbf t}_{2}]_{j_{1}}} {\mathbf T}^{*} {\mathbf e}_{j}
+
\sum_{\substack{j\in I_3}}  \frac{-[S_{\gamma} {\mathbf T} {\mathbf t}_{1} - S_{\gamma} {\mathbf T} {\mathbf t}_{2}]_{j}}
{[S_{\gamma} {\mathbf T} {\mathbf t}_{1} - S_{\gamma} {\mathbf T} {\mathbf t}_{2}]_{j_{1}}} {\mathbf T}^{*} {\mathbf e}_{j}. $$
A closer look at the coefficients shows that
\begin{equation*}
  \frac{-[S_{\gamma} {\mathbf T} {\mathbf t}_{1} - S_{\gamma} {\mathbf T} {\mathbf t}_{2}]_{j}}
  {[S_{\gamma} {\mathbf T} {\mathbf t}_{1} - S_{\gamma} {\mathbf T} {\mathbf t}_{2}]_{j_{1}}} < 0 \;\;  \textrm{for} \; j\in I_1\setminus \{j_1\},\qquad
   \frac{-[S_{\gamma} {\mathbf T} {\mathbf t}_{1} - S_{\gamma} {\mathbf T} {\mathbf t}_{2}]_{j}}
  {[S_{\gamma} {\mathbf T} {\mathbf t}_{1} - S_{\gamma} {\mathbf T} {\mathbf t}_{2}]_{j_{1}}} > 0 \;\; \textrm{for} \; j\in I_3.
\end{equation*}
Hence,
  \begin{align*}
    &[{\mathbf T} ({\mathbf t}_{1} - {\mathbf t}_{2})]_{j_{1}}
    = \textstyle \langle {\mathbf T} ({\mathbf t}_{1} - {\mathbf t}_{2}), {\mathbf e}_{j_{1}} \rangle_{2} = \langle {\mathbf t}_{1} - {\mathbf t}_{2}, {\mathbf T}^{*} {\mathbf e}_{j_{1}} \rangle_{2}\\
    &= \textstyle \left\langle\mathbf{t}_1 - \mathbf{t}_2, \sum\limits_{\substack{j\in I_1\\j\neq j_1}}  \frac{-[S_{\gamma} {\mathbf T} {\mathbf t}_{1} - S_{\gamma} {\mathbf T} {\mathbf t}_{2}]_{j}}
    {[S_{\gamma} {\mathbf T} {\mathbf t}_{1} - S_{\gamma} {\mathbf T} {\mathbf t}_{2}]_{j_{1}}} {\mathbf T}^{*} {\mathbf e}_{j}
    +
    \sum\limits_{{j\in I_3}}  \frac{-[S_{\gamma} {\mathbf T} {\mathbf t}_{1} - S_{\gamma} {\mathbf T} {\mathbf t}_{2}]_{j}}
    {[S_{\gamma} {\mathbf T} {\mathbf t}_{1} - S_{\gamma} {\mathbf T} {\mathbf t}_{2}]_{j_{1}}} {\mathbf T}^{*} {\mathbf e}_{j}\right\rangle_{2}\\
    &= \textstyle \sum\limits_{\substack{j\in I_1\\j\neq j_1}}  \underbrace{\textstyle \frac{-[S_{\gamma} {\mathbf T} {\mathbf t}_{1} - S_{\gamma} {\mathbf T} {\mathbf t}_{2}]_{j}}
    {[S_{\gamma} {\mathbf T} {\mathbf t}_{1} - S_{\gamma} {\mathbf T} {\mathbf t}_{2}]_{j_{1}}}}_{<0} \underbrace{\left\langle{\mathbf T} ({\mathbf t}_{1} - {\mathbf t}_{2}), {\mathbf e}_{j}\right\rangle_{2}}_{>0}
    +
    \sum\limits_{{j\in I_3}}  \underbrace{\textstyle \frac{-[S_{\gamma} {\mathbf T} {\mathbf t}_{1} - S_{\gamma} {\mathbf T} {\mathbf t}_{2}]_{j}}
    {[S_{\gamma} {\mathbf T} {\mathbf t}_{1} - S_{\gamma} {\mathbf T} {\mathbf t}_{2}]_{j_{1}}}}_{>0} \underbrace{\left\langle{\mathbf T} ({\mathbf t}_{1} - {\mathbf t}_{2}), {\mathbf e}_{j}\right\rangle_{2}}_{<0}<0,
  \end{align*}
which contradicts the above assumption that $j_1 \in I_1$.
Therefore, $I_{1} \cup I_{3} = \emptyset$,
and for each component it follows  that $[{\mathbf T} {\mathbf t}_{1}]_{j}= [{\mathbf T} {\mathbf t}_{2}]_{j}$.
We conclude that ${\mathbf t}_{1} = {\mathbf t}_{2}$.
Since $H({\mathbf x})$ is single-valued for ${\mathbf x} \in {\mathcal U}_{{\mathbf T}, \gamma}$, it follows with (\ref{gam})
that
$$ {\mathbf x} = {\mathbf T}^{\dagger} S_{\gamma} {\mathbf T} {\mathbf t}_{1} = {\mathbf T}^{\dagger} ({\mathbf T} {\mathbf t}_{1} - \gamma \sign {\mathbf x} ) = {\mathbf t}_{1} - \gamma {\mathbf T}^{\dagger} \sign {\mathbf x}$$
and thus $H({\mathbf x} )= {\mathbf t}_{1} - {\mathbf x} = \gamma {\mathbf T}^{\dagger} \sign {\mathbf x}$.
\end{proof}

\begin{remark} 
Summarizing, the set-valued function $H({\mathbf x})$ in $(\ref{H})$  satisfies
$$ H({\mathbf 0}) = \{{\mathbf y} \in {\mathbb R}^{N}: \, \|{\mathbf T} {\mathbf y}\|_{\infty} \le \gamma \}, \quad \textrm{and} \quad
H({\mathbf x}) = \gamma {\mathbf T}^{\dagger} \sign {\mathbf x} \quad  \textrm{for} \quad  {\mathbf x} \in {\mathcal U}_{{\mathbf T}, \gamma}. $$
For comparison,  the subdifferential of $\gamma \, \|{\mathbf T} \cdot\|_{1}$ is given by
$\gamma \, \partial \| {\mathbf T}{\cdot} \|_{1} ({\mathbf x}) = \gamma \, {\mathbf T}^{*} \, {\sign} \, ({\mathbf T} {\mathbf x})$, see $\cite{Bauschke_2011}$, Corollary $16.42$. Thus, we observe that
$$\gamma \, \partial \| {\mathbf T}{\cdot} \|_{1} ({\mathbf 0}) = H({\mathbf 0})$$
and for ${\mathcal U}_{2\gamma} :=\{{\mathbf x} \in {\mathbb R}^{N}: \, |[{\mathbf T} {\mathbf x}]_{j}| \ge 2\gamma \; \forall \, j=1, \ldots , L\}$
we find $\gamma \, \partial \| {\mathbf T}{\cdot} \|_{1} ({\mathbf x}) = H({\mathbf x})$ if ${\mathbf T}$ is a so-called Parseval frame matrix satisfying ${\mathbf T}^{*} {\mathbf T} = {\mathbf I}_{N}$.
\end{remark}

\section{The frame soft threshold operator is a proximity operator}
\label{sec:4}

Throughout this section, we again assume that ${\mathbf T} \in {\mathbb R}^{L \times N}$  with $L > N$ has full rank $N$, $\gamma >0$ and let $S_{\gamma}$ the soft shrinkage operator given in (\ref{soft}).
In this section, we will show that the set-valued function $H$ in (\ref{H}) is the subdifferential of a proper, lower semi-continuous and convex  function $\Phi$, i.e., $\Phi  \in \Gamma_{0}$. Thus, we will be able to conclude that ${\mathbf T}^{\dagger} S_{\gamma} {\mathbf T}$ is indeed a proximity operator.
Let us first recall the following definition.

\begin{definition}[12.24 in \cite{Rockafellar_1998}]\label{lemma:cyclicalmonotony}
Let $\langle \cdot , \cdot \rangle$ denote  a scalar product in ${\mathbb R}^{N}$.
  A mapping \linebreak
  $H\colon\mathbb{R}^N\rightrightarrows\mathbb{R}^N$ is called \emph{cyclically monotone} if for any $m \in {\mathbb N}$, $m \ge 2$  and any choice of points ${\mathbf x}_1, \ldots, {\mathbf x}_m$ in ${\mathbb R}^{N}$ and elements
  ${\mathbf y}_i \in H({\mathbf x}_i)$ we have
  \begin{equation}\label{cyclicalmonotonyeq}
    \langle  {\mathbf x}_2- {\mathbf x}_1, {\mathbf y}_1\rangle + \langle  {\mathbf x}_3-{\mathbf x}_2,  {\mathbf y}_2\rangle + \cdots \langle  {\mathbf x}_1- {\mathbf x}_m, {\mathbf y}_m\rangle \leq 0 .
  \end{equation}
  We call $H$ \emph{maximally cyclically monotone} if it is cyclically monotone and its graph cannot be enlarged without destroying this property.
\end{definition}

We will employ the following theorem.

\begin{theorem}[\cite{Bauschke_2011}]\label{bau}
A set-valued mapping ${H}:\mathbb{R}^N\rightrightarrows\mathbb{R}^N$
is the subdifferential of  a function  $\Phi \in \Gamma_{0}$,i.e., $H = \partial \Phi$, if and only if $H$ is maximally  cyclically monotone.
\end{theorem}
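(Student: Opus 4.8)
The plan is to prove this classical characterization (Rockafellar's theorem) by establishing the two implications separately, handling the forward direction by telescoping and the converse by the explicit construction of a potential function. For the forward implication, suppose $H = \partial \Phi$ with $\Phi \in \Gamma_{0}$. Cyclical monotonicity then follows directly from the subgradient inequality (\ref{part}): for each $i$, $\mathbf{y}_i \in \partial\Phi(\mathbf{x}_i)$ yields $\langle \mathbf{x}_{i+1} - \mathbf{x}_i, \mathbf{y}_i\rangle \le \Phi(\mathbf{x}_{i+1}) - \Phi(\mathbf{x}_i)$, where indices are taken cyclically with $\mathbf{x}_{m+1} = \mathbf{x}_1$. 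Summing over $i = 1, \ldots, m$ the right-hand side telescopes to zero, which is exactly (\ref{cyclicalmonotonyeq}). The maximality part of this direction I would defer until the converse construction is available.

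For the converse — the substantial direction — I would employ Rockafellar's potential. Fixing a base point $(\mathbf{a}, \mathbf{b}) \in \operatorname{gph} H$, define
\[
\Phi(\mathbf{x}) := \sup \Big\{ \langle \mathbf{x} - \mathbf{x}_m, \mathbf{b}_m\rangle + \langle \mathbf{x}_m - \mathbf{x}_{m-1}, \mathbf{b}_{m-1}\rangle + \cdots + \langle \mathbf{x}_1 - \mathbf{a}, \mathbf{b}\rangle\Big\},
\]
where the supremum ranges over all finite chains $(\mathbf{a}, \mathbf{b}), (\mathbf{x}_1, \mathbf{b}_1), \ldots, (\mathbf{x}_m, \mathbf{b}_m)$ in $\operatorname{gph} H$. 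Each bracketed expression is affine in $\mathbf{x}$, so $\Phi$ is a pointwise supremum of affine functions and hence convex and lower semi-continuous. For properness I would use cyclical monotonicity to show $\Phi(\mathbf{a}) = 0$: the trivial chain gives $\Phi(\mathbf{a}) \ge 0$, while any chain returning to $\mathbf{a}$ is precisely a sum of the form (\ref{cyclicalmonotonyeq}), hence $\le 0$. Thus $\Phi$ is finite at $\mathbf{a}$ and, being a supremum of real affine functions, never equals $-\infty$, so $\Phi \in \Gamma_{0}$.

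The heart of the argument is to show $H \subseteq \partial\Phi$. Given $(\bar{\mathbf{x}}, \bar{\mathbf{y}}) \in \operatorname{gph} H$ and any $\mathbf{x}$, I would pick a chain realizing $\Phi(\bar{\mathbf{x}})$ up to $\varepsilon$, append the admissible pair $(\bar{\mathbf{x}}, \bar{\mathbf{y}})$ to its end, and evaluate the resulting longer chain at $\mathbf{x}$. This produces $\Phi(\mathbf{x}) \ge \langle \mathbf{x} - \bar{\mathbf{x}}, \bar{\mathbf{y}}\rangle + \Phi(\bar{\mathbf{x}}) - \varepsilon$, and letting $\varepsilon \to 0$ gives exactly the subgradient inequality $\bar{\mathbf{y}} \in \partial\Phi(\bar{\mathbf{x}})$. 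Since $\partial\Phi$ is itself cyclically monotone by the forward implication already proved, it is a cyclically monotone operator containing $H$; maximality of $H$ then forces $H = \partial\Phi$, completing the converse.

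It remains to close the maximality claim in the forward implication, for which I would invoke the classical Minty–Rockafellar fact that $\partial\Phi$ is maximally monotone whenever $\Phi \in \Gamma_{0}$. Indeed, if some cyclically monotone $A$ satisfied $\partial\Phi \subseteq A$, then the construction above — based at a point of $\operatorname{gph}\partial\Phi$ — yields $\Psi \in \Gamma_{0}$ with $\partial\Phi \subseteq A \subseteq \partial\Psi$; as $\partial\Phi$ is maximally monotone and $\partial\Psi$ is monotone, this forces $\partial\Phi = \partial\Psi = A$, so $\partial\Phi$ is maximally cyclically monotone. I expect the main obstacle to be the converse: specifically, verifying properness of the potential and carrying out the chain-extension argument for $H \subseteq \partial\Phi$, whereas the maximality step can lean on the external Minty–Rockafellar theorem rather than a self-contained computation.
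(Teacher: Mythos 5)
The paper does not prove this statement at all---it is quoted as a known result (Rockafellar's classical characterization of subdifferentials, cf.\ Theorem 22.18 in \cite{Bauschke_2011})---and your argument is exactly the standard proof from that source: telescoping the subgradient inequalities for necessity, and the Rockafellar potential built from chains in $\operatorname{gph} H$ for sufficiency, with maximality of $\partial\Phi$ closed via Minty's theorem. The sketch is correct and matches the cited proof, so there is nothing to add.
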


In order to show, that $H$ in (\ref{H}) is indeed maximally cyclically monotone, we need a
preliminary lemma.

\begin{lemma}\label{mon1}
Let ${\mathbf x}_{1}, \, {\mathbf x}_{2} \in {\mathbb R}^{N}$ and ${\mathbf y}_{1} \in H({\mathbf x}_{1})$, ${\mathbf y}_{2} \in H({\mathbf x}_{2})$.
 Further, let
 $${\mathbf z}_{1} \coloneqq ({\mathbf I}_{L} - S_{\gamma}) {\mathbf T} ({\mathbf x}_{1} + {\mathbf y}_{1}), \qquad
 {\mathbf z}_{2} \coloneqq ({\mathbf I}_{L} - S_{\gamma}) {\mathbf T} ({\mathbf x}_{2} + {\mathbf y}_{2}).$$
 Then
 $$ \langle S_{\gamma} {\mathbf T} ({\mathbf x}_{1} + {\mathbf y}_{1}), {\mathbf z}_{2} -{\mathbf z}_{1} \rangle_{2} \le 0,$$
 where $\langle \cdot , \cdot \rangle_{2}$ denotes  the standard scalar product in ${\mathbb R}^{L}$.
\end{lemma}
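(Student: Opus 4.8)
The plan is to recognize the operator ${\mathbf I}_{L} - S_{\gamma}$ as a Euclidean projection, which reduces the claimed inequality to the standard variational characterization of projections onto closed convex sets.

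First I would abbreviate ${\mathbf u}_{1} \coloneqq {\mathbf T}({\mathbf x}_{1} + {\mathbf y}_{1})$ and ${\mathbf u}_{2} \coloneqq {\mathbf T}({\mathbf x}_{2} + {\mathbf y}_{2})$, so that ${\mathbf z}_{i} = ({\mathbf I}_{L} - S_{\gamma}){\mathbf u}_{i}$ and the quantity to be estimated is $\langle S_{\gamma}{\mathbf u}_{1}, {\mathbf z}_{2} - {\mathbf z}_{1}\rangle_{2}$. Inspecting the three cases in the definition (\ref{soft}) of $S_{\gamma}$ componentwise shows that $({\mathbf I}_{L} - S_{\gamma})$ acts as the coordinatewise clamping map onto the box $C \coloneqq \{{\mathbf z} \in {\mathbb R}^{L}: \|{\mathbf z}\|_{\infty} \le \gamma\}$; that is, ${\mathbf I}_{L} - S_{\gamma} = P_{C}$, the Euclidean projection onto the closed convex set $C$. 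Consequently ${\mathbf z}_{1} = P_{C}{\mathbf u}_{1}$ and ${\mathbf z}_{2} = P_{C}{\mathbf u}_{2}$, and moreover $S_{\gamma}{\mathbf u}_{1} = {\mathbf u}_{1} - ({\mathbf I}_{L} - S_{\gamma}){\mathbf u}_{1} = {\mathbf u}_{1} - {\mathbf z}_{1}$.

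With these identifications the target inequality becomes $\langle {\mathbf u}_{1} - {\mathbf z}_{1}, {\mathbf z}_{2} - {\mathbf z}_{1}\rangle_{2} \le 0$. This is precisely the obtuse-angle characterization of the projection: since ${\mathbf z}_{1} = P_{C}{\mathbf u}_{1}$, one has $\langle {\mathbf u}_{1} - {\mathbf z}_{1}, {\mathbf w} - {\mathbf z}_{1}\rangle_{2} \le 0$ for every ${\mathbf w} \in C$, and ${\mathbf z}_{2} = P_{C}{\mathbf u}_{2} \in C$ is an admissible choice ${\mathbf w} = {\mathbf z}_{2}$. This finishes the argument.

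If one prefers a fully self-contained verification that avoids quoting the projection theorem, the same conclusion follows coordinatewise, using that $C$ is a product of intervals: for each $j$ one checks in the three cases $[{\mathbf u}_{1}]_{j} \ge \gamma$, $[{\mathbf u}_{1}]_{j} \le -\gamma$, and $|[{\mathbf u}_{1}]_{j}| < \gamma$ that the summand $[S_{\gamma}{\mathbf u}_{1}]_{j}\,([{\mathbf z}_{2}]_{j} - [{\mathbf z}_{1}]_{j})$ is nonpositive; in the first case $[S_{\gamma}{\mathbf u}_{1}]_{j} \ge 0$ while $[{\mathbf z}_{1}]_{j} = \gamma \ge [{\mathbf z}_{2}]_{j}$, in the second the signs are reversed, and in the third $[S_{\gamma}{\mathbf u}_{1}]_{j} = 0$. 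Summing over $j$ then yields the claim. The only real content is the identification ${\mathbf I}_{L} - S_{\gamma} = P_{C}$; I expect no genuine obstacle beyond making that observation precise.
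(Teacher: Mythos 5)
Your proof is correct. Your primary route differs from the paper's in packaging: you identify ${\mathbf I}_{L}-S_{\gamma}$ as the Euclidean projection $P_{C}$ onto the box $C=\{{\mathbf z}:\|{\mathbf z}\|_{\infty}\le\gamma\}$ (which is Moreau's decomposition $ {\mathbf I}-\prox_{\gamma\|\cdot\|_{1}}=\prox_{(\gamma\|\cdot\|_{1})^{*}}=P_{C}$, valid because $\gamma\|\cdot\|_{1}$ is positively homogeneous so its conjugate is an indicator), and then the claim is exactly the obtuse-angle variational inequality $\langle {\mathbf u}_{1}-P_{C}{\mathbf u}_{1},\,{\mathbf w}-P_{C}{\mathbf u}_{1}\rangle_{2}\le 0$ for ${\mathbf w}={\mathbf z}_{2}\in C$. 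The paper instead argues directly: it records that every component of ${\mathbf z}_{1},{\mathbf z}_{2}$ lies in $[-\gamma,\gamma]$ and then checks the sign of each summand $[S_{\gamma}{\mathbf T}({\mathbf x}_{1}+{\mathbf y}_{1})]_{j}\,[{\mathbf z}_{2}-{\mathbf z}_{1}]_{j}$ in the three cases $[{\mathbf T}({\mathbf x}_{1}+{\mathbf y}_{1})]_{j}>\gamma$, $<-\gamma$, $|\cdot|\le\gamma$ --- which is word for word your ``self-contained'' fallback paragraph. Your projection formulation buys a cleaner conceptual statement and suggests how the lemma would generalize to other positively homogeneous regularizers; the paper's computation is more elementary and avoids invoking the projection theorem. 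Both are complete; no gap.
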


 \begin{proof}
From the definition of $S_{\gamma}$ it follows for $x \in {\mathbb R}$
$$ (1-S_{\gamma}) x =  \begin{cases}
x - (x-\gamma) = \gamma  & x > \gamma,\\
x-(x+\gamma) = -\gamma  & x < -\gamma, \\
x & |x|\le \gamma, \end{cases} $$
and therefore for all ${\mathbf x} \in {\mathbb R}^{N}$,
\begin{equation}\label{bound}
| [({\mathbf I}_{L} - S_{\gamma}) {\mathbf T} {\mathbf x}]_{j}| \le \gamma, \qquad  j=1, \ldots , L.
\end{equation}

In particular $\|{\mathbf z}_{1} \|_{\infty} \le \gamma$ and $\|{\mathbf z}_{2} \|_{\infty} \le \gamma$.
Thus, for $[{\mathbf T}({\mathbf x}_{1} + {\mathbf y}_{1})]_{j} > \gamma$ we have $[{\mathbf z}_{1}]_{j} = \gamma$ and $[{\mathbf z}_{2}]_{j} -[{\mathbf z}_{1}]_{j} \le 0$ as well as $ S_{\gamma}[{\mathbf T}({\mathbf x}_{1} + {\mathbf y}_{1})]_{j} > 0$,
while for $[{\mathbf T}({\mathbf x}_{1} + {\mathbf y}_{1})]_{j} < -\gamma$ we have $[{\mathbf z}_{1}]_{j} = -\gamma$ and $[{\mathbf z}_{2}]_{j} -[{\mathbf z}_{1}]_{j} \ge 0$ as well as $ S_{\gamma}[{\mathbf T}({\mathbf x}_{1} + {\mathbf y}_{1})]_{j} < 0$.
Finally, for $|[{\mathbf T}({\mathbf x}_{1} + {\mathbf y}_{1})]_{j}| \le \gamma$ it follows that $S_{\gamma} [{\mathbf T}({\mathbf x}_{1} + {\mathbf y}_{1})]_{j} =0$.
We therefore conclude
$$
\langle S_{\gamma} {\mathbf T} ({\mathbf x}_{1} + {\mathbf y}_{1}), {\mathbf z}_{2} -{\mathbf z}_{1} \rangle_{2} =
\sum_{j=1}^{N} [S_{\gamma} {\mathbf T} ({\mathbf x}_{1} + {\mathbf y}_{1})]_{j} [{\mathbf z}_{2} -{\mathbf z}_{1}]_{j} \le 0. \hfill
$$
and the assertion follows.
\end{proof}

\noindent
Now we can show

\begin{theorem} \label{mon2} Let ${\mathbb R}^{N}$ be equipped with the scalar product $\langle \cdot , \cdot\rangle_{{\mathbf T}}$ as in $(\ref{normneu})$. Then,  ${H}= H_{\gamma}\colon\mathbb{R}^N\rightrightarrows\mathbb{R}^N$ in $(\ref{H})$ is cyclically monotone.
\end{theorem}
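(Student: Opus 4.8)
The plan is to reduce the cyclic sum in $(\ref{cyclicalmonotonyeq})$ to quantities controlled by Lemma \ref{mon1} together with an elementary telescoping identity. Throughout, indices are read cyclically, i.e.\ $\mathbf{x}_{m+1} = \mathbf{x}_1$, $\mathbf{y}_{m+1}=\mathbf{y}_1$. For each $i$ I would set $\mathbf{w}_i := S_\gamma {\mathbf T}(\mathbf{x}_i + \mathbf{y}_i) \in {\mathbb R}^L$ and $\mathbf{z}_i := ({\mathbf I}_L - S_\gamma){\mathbf T}(\mathbf{x}_i + \mathbf{y}_i)$, exactly as in Lemma \ref{mon1}, so that ${\mathbf T}(\mathbf{x}_i + \mathbf{y}_i) = \mathbf{w}_i + \mathbf{z}_i$. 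Since $\mathbf{y}_i \in H(\mathbf{x}_i)$, the defining relation $(\ref{H})$ gives $\mathbf{x}_i = {\mathbf T}^\dagger \mathbf{w}_i$, and writing $P := {\mathbf T}{\mathbf T}^\dagger$ for the orthogonal projector onto the range of ${\mathbf T}$ (already used in Proposition \ref{prop}) we get ${\mathbf T}\mathbf{x}_i = P\mathbf{w}_i$ and ${\mathbf T}\mathbf{y}_i = \mathbf{w}_i + \mathbf{z}_i - P\mathbf{w}_i$.

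The first key step is to rewrite a single summand in the aligned scalar product. Using $(\ref{normneu})$, the identity ${\mathbf T}\mathbf{x}_{i+1} - {\mathbf T}\mathbf{x}_i = P(\mathbf{w}_{i+1}-\mathbf{w}_i)$, the fact that $P{\mathbf T}\mathbf{y}_i = {\mathbf T}\mathbf{y}_i$ (as ${\mathbf T}\mathbf{y}_i$ lies in the range of ${\mathbf T}$), and $P^{*}=P$, I obtain
$$\langle \mathbf{x}_{i+1}-\mathbf{x}_i, \mathbf{y}_i\rangle_{{\mathbf T}} = \langle {\mathbf T}\mathbf{x}_{i+1}-{\mathbf T}\mathbf{x}_i, {\mathbf T}\mathbf{y}_i\rangle_2 = \langle \mathbf{w}_{i+1}-\mathbf{w}_i, {\mathbf T}\mathbf{y}_i\rangle_2.$$
Substituting ${\mathbf T}\mathbf{y}_i = \mathbf{w}_i + \mathbf{z}_i - P\mathbf{w}_i$ splits the summand into three pieces,
$$\langle \mathbf{x}_{i+1}-\mathbf{x}_i, \mathbf{y}_i\rangle_{{\mathbf T}} = \langle \mathbf{w}_{i+1}-\mathbf{w}_i, \mathbf{w}_i\rangle_2 + \langle \mathbf{w}_{i+1}-\mathbf{w}_i, \mathbf{z}_i\rangle_2 - \langle \mathbf{w}_{i+1}-\mathbf{w}_i, P\mathbf{w}_i\rangle_2.$$

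The second step is to sum these three pieces over $i=1,\ldots,m$. For the first and third pieces I would use the elementary cyclic identity $\sum_i \langle \mathbf{a}_{i+1}-\mathbf{a}_i, \mathbf{a}_i\rangle_2 = -\tfrac12 \sum_i \|\mathbf{a}_{i+1}-\mathbf{a}_i\|_2^2$, applied once with $\mathbf{a}_i = \mathbf{w}_i$ and once with $\mathbf{a}_i = P\mathbf{w}_i$ (for the latter I first move $P$ onto the left factor, which is legitimate since $P$ is a self-adjoint projector and $\mathbf{w}_{i+1}-\mathbf{w}_i$ is paired against $P\mathbf{w}_i$). Their combination collapses to $-\tfrac12 \sum_i \big(\|\mathbf{w}_{i+1}-\mathbf{w}_i\|_2^2 - \|P(\mathbf{w}_{i+1}-\mathbf{w}_i)\|_2^2\big) = -\tfrac12 \sum_i \|({\mathbf I}_L - P)(\mathbf{w}_{i+1}-\mathbf{w}_i)\|_2^2 \le 0$. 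For the middle piece I would invoke Lemma \ref{mon1} applied to the pair of points $(\mathbf{x}_{i+1},\mathbf{y}_{i+1})$ and $(\mathbf{x}_i,\mathbf{y}_i)$, which reads $\langle \mathbf{w}_{i+1}, \mathbf{z}_i - \mathbf{z}_{i+1}\rangle_2 \le 0$; summing over $i$ and reindexing the $\mathbf{z}_{i+1}$ term cyclically yields $\sum_i \langle \mathbf{w}_{i+1}-\mathbf{w}_i, \mathbf{z}_i\rangle_2 \le 0$. Adding the two nonpositive contributions gives $(\ref{cyclicalmonotonyeq})$, proving that $H$ is cyclically monotone.

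The main obstacle is the bookkeeping in the cyclic summation: one must align the index shifts so that Lemma \ref{mon1} is applied with exactly the pairing producing $\sum_i \langle \mathbf{w}_{i+1}-\mathbf{w}_i, \mathbf{z}_i\rangle_2$, and one must check that the seemingly harmful positive term $+\tfrac12\sum_i\|P(\mathbf{w}_{i+1}-\mathbf{w}_i)\|_2^2$ arising from the third piece is always dominated by the first piece. Both points hinge on $P = {\mathbf T}{\mathbf T}^\dagger$ being an orthogonal projector, so that $\|\mathbf{a}\|_2^2 - \|P\mathbf{a}\|_2^2 = \|({\mathbf I}_L-P)\mathbf{a}\|_2^2 \ge 0$; once this nonnegative defect is isolated, the remaining manipulations are routine.
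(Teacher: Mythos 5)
Your proof is correct and follows essentially the same route as the paper: the same decomposition via $\mathbf{z}_i=(\mathbf{I}_L-S_\gamma)\mathbf{T}(\mathbf{x}_i+\mathbf{y}_i)$, the same application of Lemma \ref{mon1} to the pairing $\sum_i\langle \mathbf{w}_{i+1},\mathbf{z}_i-\mathbf{z}_{i+1}\rangle_2$, and the same telescoping bound on the residual terms. The only cosmetic difference is that you phrase the residuals through the projector $P=\mathbf{T}\mathbf{T}^\dagger$, whereas the paper works with $\mathbf{u}_i=\mathbf{T}\mathbf{y}_i-\mathbf{z}_i=(\mathbf{I}_L-P)\mathbf{w}_i\in\ker\mathbf{T}^*$; your $-\tfrac12\sum_i\|(\mathbf{I}_L-P)(\mathbf{w}_{i+1}-\mathbf{w}_i)\|_2^2$ is exactly the paper's $-\sum_i\|\mathbf{u}_i\|_2^2+\sum_i\langle\mathbf{u}_{i+1},\mathbf{u}_i\rangle_2$.
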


\begin{proof}
Let $m \in {\mathbb N}$  and $m \ge 2$. Further, for $i \in \{1, \ldots, m \}$ let ${\mathbf y}_{i} \in H({\mathbf x}_{i})$.
Recall that for ${\mathbf y}_{i} \in H({\mathbf x}_{i})$, we have by  (\ref{H}) that ${\mathbf x}_{i} = {\mathbf T}^{\dagger} S_{\gamma} {\mathbf  T} ({\mathbf x}_{i} + {\mathbf y}_{i})$.
Therefore, ${\mathbf y}_{i}$ can be rewritten as
\begin{equation}\label{y1}
 {\mathbf y}_{i}= ({\mathbf x}_{i} + {\mathbf y}_{i}) - {\mathbf T}^{\dagger} S_{\gamma} {\mathbf T} ({\mathbf x}_{i} + {\mathbf y}_{i}) = {\mathbf T}^{\dagger}({\mathbf I}_{L} -   S_{\gamma}) {\mathbf T} ({\mathbf x}_{i} + {\mathbf y}_{i}).
\end{equation}
Let
\begin{equation}\label{ui}
{\mathbf z}_{i} \coloneqq  ({\mathbf I}_{L} - S_{\gamma}) {\mathbf T}({\mathbf x}_{i} + {\mathbf y}_{i}),
\qquad
 {\mathbf u}_{i} \coloneqq  {\mathbf T} {\mathbf y}_{i} - {\mathbf z}_{i}, \quad i=1, \ldots , m.
\end{equation}
For simplicity we use the convention ${\mathbf x}_{m+1}\coloneqq {\mathbf x}_{1}$ as well as ${\mathbf y}_{m+1}\coloneqq {\mathbf y}_{1}$, and extend that similarly for ${\mathbf z}_{m+1}$, and ${\mathbf u}_{m+1}$.
Then, 
${\mathbf u}_{i} \in {\ker} \,  {\mathbf T}^{\dagger} = {\ker} \,  {\mathbf T}^{*}$, since
\begin{equation}\label{kern} {\mathbf T}^{\dagger} {\mathbf u}_{i} = {\mathbf T}^{\dagger}({\mathbf T} {\mathbf y}_{i} - ({\mathbf I}_{L} - S_{\gamma}) {\mathbf T}({\mathbf x}_{i} + {\mathbf y}_{i}))
= - {\mathbf T}^{\dagger}{\mathbf T} {\mathbf x}_{i} + {\mathbf T}^{\dagger} S_{\gamma} {\mathbf T}({\mathbf x}_{i} + {\mathbf y}_{i}) = {\mathbf 0}.
\end{equation}
%
\noindent
According to (\ref{cyclicalmonotonyeq}) we  have to show  that
$$ A\coloneqq \sum_{i=1}^{m} \langle {\mathbf x}_{i+1}- {\mathbf x}_{i}, {\mathbf y}_{i} \rangle_{\mathbf T} \le 0. $$
We observe that for all $i=1, \ldots , m$,
\begin{equation}
{\mathbf T} {\mathbf x}_{i} + {\mathbf u}_{i} = {\mathbf T} ({\mathbf x}_{i} + {\mathbf y}_{i}) - {\mathbf z}_{i}
= S_{\gamma} {\mathbf T} ({\mathbf x}_{i} + {\mathbf y}_{i}). \label{A.3}
\end{equation}
Using  (\ref{y1})--(\ref{A.3}),  it follows
\begin{align*}
A & = \sum_{i=1}^{m} \langle {\mathbf x}_{i+1}- {\mathbf x}_{i}, \, {\mathbf y}_{i} \rangle_{\mathbf T}
= \sum_{i=1}^{m} \langle {\mathbf T}({\mathbf x}_{i+1}- {\mathbf x}_{i}), \, {\mathbf T}\, {\mathbf y}_{i} \rangle_{2} \\
&= \sum_{i=1}^{m} \langle ({\mathbf x}_{i+1}- {\mathbf x}_{i}), \, {\mathbf T}^{*}{\mathbf T}\, {\mathbf T}^{\dagger} ({\mathbf I}_{L} - S_{\gamma}) {\mathbf T} ({\mathbf x}_{i}+{\mathbf y}_{i}) \rangle_{2}  = \sum_{i=1}^{m} \langle {\mathbf T} ({\mathbf x}_{i+1}- {\mathbf x}_{i}), \, {\mathbf z}_{i} \rangle_{2}\\
& = \sum_{i=1}^{m} \langle ( {\mathbf T} {\mathbf x}_{i+1} + {\mathbf u}_{i+1}) - ( {\mathbf T} {\mathbf x}_{i} + {\mathbf u}_{i}) - {\mathbf u}_{i+1} + {\mathbf u}_{i}, {\mathbf z}_{i} \rangle_{2} \\
& = \sum_{i=1}^{m} \langle S_{\gamma} {\mathbf T} ( {\mathbf x}_{i+1} +  {\mathbf y}_{i+1}), {\mathbf z}_{i} \rangle_{2}
- \sum_{i=1}^{m}  \langle S_{\gamma} {\mathbf T} ( {\mathbf x}_{i} +  {\mathbf y}_{i}), {\mathbf z}_{i} \rangle_{2}
+ \sum_{i=1}^{m}  \langle - {\mathbf u}_{i+1} + {\mathbf u}_{i}, {\mathbf z}_{i} \rangle_{2} \\
&= \sum_{i=1}^{m} \langle S_{\gamma} {\mathbf T} ({\mathbf x}_{i+1} + {\mathbf y}_{i+1}), {\mathbf z}_{i} - {\mathbf z}_{i+1}\rangle_{2} + \sum_{i=1}^{m}  \langle - {\mathbf u}_{i+1} + {\mathbf u}_{i}, {\mathbf z}_{i} \rangle_{2} .
\end{align*}
Bei Lemma \ref{mon1}, the first sum is not positive. Therefore,
\begin{align*}
A &\le  \sum_{i=1}^{m}  \langle - {\mathbf u}_{i+1} + {\mathbf u}_{i}, {\mathbf z}_{i} \rangle_{2} = \sum_{i=1}^{m}  \langle - {\mathbf u}_{i+1} + {\mathbf u}_{i}, {\mathbf T} {\mathbf y}_{i} - {\mathbf u}_{i} \rangle_{2} \\
&= \sum_{i=1}^{m} \langle {\mathbf T}^{*}( - {\mathbf u}_{i+1} + {\mathbf u}_{i}), {\mathbf y}_{i} \rangle_{2} - \sum_{i=1}^{m} \langle {\mathbf u}_{i}, {\mathbf u}_{i} \rangle_{2} + \sum_{i=1}^{m} \langle {\mathbf u}_{i+1}, {\mathbf u}_{i} \rangle_{2}.
\end{align*}
The first sum vanishes, since ${\mathbf u}_{i}$ and ${\mathbf u}_{i+1}$ are in ${\rm  ker} \, {\mathbf T}^{*}$.  Thus
$$
A \le  - \sum_{i=1}^{m} \| {\mathbf u}_{i} \|_{2}^{2} + \sum_{i=1}^{m} \langle {\mathbf u}_{i+1}, {\mathbf u}_{i} \rangle_{2}
\le - \sum_{i=1}^{m} \| {\mathbf u}_{i} \|_{2}^{2} + \frac{1}{2} \left( \sum_{i=1}^{m} \| {\mathbf u}_{i} \|_{2}^{2} + \| {\mathbf u}_{i+1} \|_{2}^{2} \right) =0.
$$
Hence the assertion of the theorem holds.
\end{proof}
\medskip

Now we can conclude the main theorem of this paper.

\begin{theorem}
Let ${\mathbf T} \in {\mathbb R}^{L \times N}$ with $L \ge N$ and full rank $N$.
Then  the operator
$ {\mathbf T}^{\dagger} S_{\gamma} {\mathbf T} $
is a proximity operator of a proper,  lower semi-continuous, convex function $\Phi$ in ${\mathbb R}^{N}$ aligned with the scalar  product $\langle \cdot, \cdot \rangle_{\mathbf T}$.
\end{theorem}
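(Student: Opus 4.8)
The plan is to identify the set-valued map $H$ from (\ref{H}) as the subdifferential of the sought functional $\Phi$, and then simply read off the proximity operator from the resolvent identity (\ref{reso}). Throughout I would work in the Hilbert space $(\mathbb{R}^N,\langle\cdot,\cdot\rangle_{\mathbf{T}})$ with the aligned scalar product from (\ref{normneu}), so that monotonicity, the subdifferential $\partial\Phi$, and $\textrm{prox}_{\Phi}$ are all taken in this geometry; this is exactly the ``alignment'' asserted in the statement. By Theorem \ref{mon2}, $H$ is already cyclically monotone in this inner product, and in particular (the case $m=2$) monotone. Hence the only genuinely new ingredient needed is to upgrade cyclical monotonicity to \emph{maximal} cyclical monotonicity, after which Theorem \ref{bau} applies directly.

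The key observation is that the resolvent of $H$ is available in closed form. Unwinding (\ref{H}): for any $\mathbf{z}\in\mathbb{R}^N$ and $\mathbf{y}=\mathbf{z}-\mathbf{x}$, the inclusion $\mathbf{z}-\mathbf{x}\in H(\mathbf{x})$ is equivalent to $\mathbf{x}=\mathbf{T}^{\dagger}S_{\gamma}\mathbf{T}\,\mathbf{z}$. Thus $(\mathbf{I}_N+H)^{-1}=\mathbf{T}^{\dagger}S_{\gamma}\mathbf{T}$ is single-valued and defined on all of $\mathbb{R}^N$, which is precisely the statement $\textrm{range}(\mathbf{I}_N+H)=\mathbb{R}^N$ (Theorem \ref{Hwell} already guarantees $H(\mathbf{x})\neq\emptyset$, so the resolvent is genuinely everywhere-defined). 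By Minty's theorem in $(\mathbb{R}^N,\langle\cdot,\cdot\rangle_{\mathbf{T}})$, a monotone operator whose $\mathbf{I}_N+H$ has full range is maximally monotone, so $H$ is maximally monotone. Since cyclical monotonicity implies monotonicity, any proper enlargement of the graph of $H$ that remained cyclically monotone would in particular remain monotone, contradicting maximal monotonicity; therefore $H$ is maximally cyclically monotone.

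With maximal cyclical monotonicity established, Theorem \ref{bau} produces a function $\Phi\in\Gamma_{0}$ with $H=\partial\Phi$ (the subdifferential taken with respect to $\langle\cdot,\cdot\rangle_{\mathbf{T}}$). Finally, the equivalence (\ref{prox1}), equivalently the resolvent formula (\ref{reso}), yields $\textrm{prox}_{\Phi}=(\mathbf{I}_N+\partial\Phi)^{-1}=(\mathbf{I}_N+H)^{-1}=\mathbf{T}^{\dagger}S_{\gamma}\mathbf{T}$, which is exactly the assertion.

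I expect the main obstacle to be the maximality step rather than cyclical monotonicity, the latter being already settled in Theorem \ref{mon2}. The delicate point is to keep the two geometries straight: the identity $\mathbf{I}_N$ in the resolvent is independent of the inner product, so $\textrm{range}(\mathbf{I}_N+H)=\mathbb{R}^N$ is unambiguous, yet monotonicity, maximality, and the very meaning of $\partial\Phi$ and $\textrm{prox}_{\Phi}$ must all be read in the $\mathbf{T}$-inner product. As a safeguard, one could bypass Minty altogether and argue through Proposition \ref{prop}: since $\mathbf{T}^{\dagger}S_{\gamma}\mathbf{T}$ is firmly nonexpansive and everywhere-defined with respect to $\|\cdot\|_{\mathbf{T}}$, it is the resolvent of a unique maximally monotone operator, which by the closed-form computation above must be $H$; combined with the cyclical monotonicity of Theorem \ref{mon2}, this again delivers maximal cyclical monotonicity and the same conclusion.
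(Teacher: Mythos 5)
Your proof is correct and follows essentially the same route as the paper's: cyclical monotonicity from Theorem \ref{mon2}, surjectivity of $\mathbf{I}_N+H$ combined with Minty's theorem to obtain maximal monotonicity, and then Theorem \ref{bau} together with the resolvent identity. You are in fact slightly more explicit than the paper in two spots---you derive $\operatorname{range}(\mathbf{I}_N+H)=\mathbb{R}^N$ directly from the closed-form resolvent rather than from the boundedness of $H$, and you spell out why maximal monotonicity plus cyclical monotonicity upgrades to maximal cyclical monotonicity---but the argument is the same.
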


\begin{proof}
First, recall from (\ref{prox1}) that for any vectors $\mathbf{x}$ and $\mathbf{z}$, we have $\mathbf{x} = \prox_\Phi(\mathbf{z})$ if and only if $\mathbf{x} - \mathbf{z} \in \partial\Phi ({\mathbf x})$.
So, by Theorem \ref{bau}, we need to prove  that $H({\mathbf x})$ in (\ref{H}) is maximally cyclically monotone.
As shown in the previous theorem, we already have that $H({\mathbf x})$ is cyclically monotone.
Further, by Theorem \ref{Hwell}, $H({\mathbf x})$ is bounded, i.e., for all ${\mathbf x} \in {\mathbb R}^{N}$ we have that ${\mathbf y} \in H({\mathbf x})$ implies $\|{\mathbf y}\|_{2} \le \gamma \,  \sqrt{L}\, \|{\mathbf T} \|_{2}$.
Therefore, the range of the operator ${\mathbf I}_{N} + H$ is ${\mathbb R}^{N}$.
By Minty's Theorem, see \cite{Bauschke_2011}, Theorem 21.1, it follows that $H({\mathbf x}) $ is also maximally monotone.
The assertion now follows from Theorem \ref{bau}.
\end{proof}

\begin{remark}
1. We have shown that ${\mathbf T}^{\dagger} S_{\gamma} {\mathbf T}$ is a proximity operator, i.e., there is a $\Phi \in \Gamma_{0}$ with $\prox_{\Phi}= {\mathbf T}^{\dagger} S_{\gamma} {\mathbf T}$. We can however not give a closed expression for $\Phi$. Using the notion of infimal convolution, given by
$(f \, \Box \, g)({\mathbf x} ) \coloneqq \inf_{{\mathbf y} \in {\mathbb R}^{N}} (f({\mathbf y}) + g({\mathbf x}- {\mathbf y}))$ for $f, g \in \Gamma_{0}$ with respect to ${\mathbb R}^{N}$,
one can write,
$$ \Phi({\mathbf x}) = \gamma \|\cdot\|_{1} \Box \big( \tfrac{1}{2} \| \cdot\|_{2}^{2} \big)({\mathbf T} {\mathbf x}) $$
as indicated in \cite{HHN20}, Corollary 4.2. Conversely, as outlined before, there is no closed representation of the proximity operator of $\Phi({\mathbf x}) = \| {\mathbf T} {\mathbf x}\|_{1}$ for the considered case of non-surjective matrices ${\mathbf T}$. We have however seen in Section 2, that the proximity operator of $\| {\mathbf T} {\mathbf x}\|_{1}$ behaves similarly as ${\mathbf T}^{\dagger} S_{\gamma} {\mathbf T}$, particularly for Parseval frame matrices satisfying ${\mathbf T}^{\dagger} = {\mathbf T}^{*}$.

2.  Assume that we have ${\mathbf T} \in {\mathbb R}^{N \times L}$ of full rank $N \le L$. Taking a set-valued mapping
${H}= H_{\gamma}\colon\mathbb{R}^N\rightrightarrows\mathbb{R}^N$ that is defined by ${\mathbf y} \in H({\mathbf x})$ if ${\mathbf x} = {\mathbf T}^{\dagger} \, \mathrm{prox}\,  {\mathbf T} ({\mathbf x} + {\mathbf y})$ with a proximity operator $\mathrm{prox}: {\mathbb R}^{L} \to {\mathbb R}^{L}$, then we can show similarly as above that $H$ is maximally cyclically monotone and therefore ${\mathbf T}^{\dagger} \, \mathrm{prox} \, {\mathbf T} (\cdot)$ is a proximity operator in ${\mathbb R}^{N}$ with respect to the norm $\langle \cdot, \cdot \rangle_{\mathbf T}$. Here the change of the norm is crucial and enables us to use the properties of the proximity operator  in ${\mathbb R}^{L}$. We refer to \cite{HHN20} for a different proof of this assertion.
\end{remark}

\section*{Acknowledgements}
The authors are supported by the  German Science Foundation (DFG) in the Collaborative Research Centre ``SFB 755: Nanoscale Photonic Imaging'' and partially in the framework of the Research Training Group ``GRK 2088: Discovering Structure in Complex Data: Statistics meets Optimization and Inverse Problems'' which he both gratefully acknowledges.

The authors thank Stefan Loock who helped to lay the foundation to this manuscript and Russell Luke and Gabriele Steidl for fruitful discussions. Further, the authors thank the reviewer for advices to clarify notations and to improve the representation of the manuscript.

\bibliographystyle{siam}
\begin{small}
\bibliography{__all}
\end{small}

\end{document}